\documentclass[reqno,english]{amsart}

\usepackage{amsmath,amsfonts,amssymb,graphicx,amsthm,enumerate,url}
\usepackage[noadjust]{cite}
\usepackage{stmaryrd}
\usepackage{comment,paralist}
\usepackage{mathrsfs,booktabs,tabularx}
\usepackage{xifthen,xcolor,tikz,setspace}
\usetikzlibrary{decorations.markings,patterns,shapes,calc,decorations}
\usepackage{mathtools}

 \usepackage[letterpaper]{geometry}
 \geometry{verbose,tmargin=1in,bmargin=1in,lmargin=1in,rmargin=1in}

\usepackage[colorinlistoftodos]{todonotes}
\usepackage[colorlinks=true]{hyperref}

\numberwithin{equation}{section}

\newcommand{\Bin}{\operatorname{Bin}}
\newcommand{\Po}{\operatorname{Po}}

\renewcommand{\epsilon}{\varepsilon}

\usepackage{color}

\newtheorem{maintheorem}{Theorem}
\newtheorem{theorem}{Theorem}[section]

\newtheorem{lemma}[theorem]{Lemma}

\newtheorem{observation}[theorem]{Observation}
\newtheorem*{observation*}{Observation}

\newtheorem{corollary}[theorem]{Corollary}

\newtheorem{remark}[theorem]{Remark}

\theoremstyle{definition}{

\newtheorem*{definition*}{Definition}

}

\newcommand{\E}{\mathbb E}

\renewcommand{\P}{\mathbb P}

\newcommand{\Z}{\mathbb Z}

\newcommand{\cB}{\mathcal B}

\newcommand{\cD}{\mathcal D}
\newcommand{\cE}{\mathcal E}
\newcommand{\cF}{\mathcal F}
\newcommand{\cG}{\mathcal G}
\newcommand{\cH}{\mathcal H}

\newcommand{\cX}{\mathcal X}
\newcommand{\cY}{\mathcal Y}
\newcommand{\sL}{\mathscr L}
\newcommand{\sfC}{\mathsf C}
\newcommand{\sfM}{\mathsf M}
\newcommand{\tmf}{\mathfrak{t}}
\newcommand{\mmf}{\mathfrak{m}}

\newcommand{\llb}{\llbracket}
\newcommand{\rrb}{\rrbracket}

\newcommand{\circum}{L_{\max}}

\makeatletter
\newcommand{\tpmod}[1]{{\@displayfalse\pmod{#1}}}
\makeatother

\begin{document}

\title{Cycle lengths in sparse random graphs}

\author{Yahav Alon}
\address{Y.\ Alon\hfill\break
School of Mathematical Sciences\\ 
Raymond and Beverly Sackler Faculty of Exact Sciences\\
Tel Aviv University\\ Tel Aviv 6997801, Israel.}
\email{yahavalo@tauex.tau.ac.il}

\author{Michael Krivelevich}
\address{M.\ Krivelevich\hfill\break
School of Mathematical Sciences\\ 
Raymond and Beverly Sackler Faculty of Exact Sciences\\
Tel Aviv University\\ Tel Aviv 6997801, Israel.}
\email{krivelev@tauex.tau.ac.il}

\author{Eyal Lubetzky}
\address{E.\ Lubetzky\hfill\break
Courant Institute of Mathematical Sciences\\ New York University\\
251 Mercer Street\\ New York, NY 10012,~USA.}
\email{eyal@courant.nyu.edu}

\title{Cycle lengths in sparse random graphs}

\begin{abstract}
We study the set $\sL(G)$ of lengths of all cycles that appear in a random $d$-regular $G$ on $n$ vertices for a fixed $d\geq 3$, as well as in Erd\H{o}s--R\'enyi random graphs on $n$ vertices with a fixed average degree $c>1$. Fundamental results on the distribution of cycle counts in these models were established in the 1980's and early 1990's, with a focus on the extreme lengths: cycles of fixed length, and cycles of length linear in $n$. 
Here we derive, for a random $d$-regular graph, the limiting probability that $\sL(G)$ simultaneously contains the entire range $\{\ell,\ldots,n\}$ for $\ell\geq 3$, as an explicit expression $\theta_\ell=\theta_\ell(d)\in(0,1)$ which goes to $1$ as $\ell\to\infty$. For the random graph $\cG(n,p)$ with $p=c/n$, where $c\geq C_0$ for  some absolute constant $C_0$, we show the analogous result for the range $\{\ell,\ldots,(1-o(1))\circum(G)\}$, where $\circum$ is the length of a longest cycle in $G$. 
The limiting probability for $\cG(n,p)$ coincides with $\theta_\ell$ from the $d$-regular case when $c$ is the integer $d-1$. 
In~addition, for the directed random graph $\cD(n,p)$ we show results analogous to those on $\cG(n,p)$, and for both models we find an interval of $c \epsilon^2 n$ consecutive cycle lengths in the slightly supercritical regime $p=\frac{1+\epsilon}n$.
\end{abstract}

\maketitle

\section{Introduction} \label{sec-intro} 

We study the set $\sL(G)$ of cycle lengths appearing in a random graph $G$ on $n$ vertices with constant average degree under the classical random graph distributions: the random regular graph $\cG(n,d)$ (the uniform distribution over $d$-regular simple graphs on $n$ vertices) and the (Erd\H{o}s--R\'enyi) binomial random graph $\cG(n,p)$ (each undirected edge $ij$ for $1\leq i<j\leq n$ appears with probability $p$, independently of the other edges). We also consider $\cD(n,p)$, the directed analog of $\cG(n,p)$, which has  $n(n-1)$ i.i.d.\ Bernoulli($p$) edge variables.

Much is known about the distribution of cycles in these random graph models (see~\S\ref{sec:related-work} for a brief account), including (a) the convergence of the joint law of the variables $\{Z_k\}_{k\geq 3}$, counting the number of $k$-cycles in~$G$, to the joint law of independent Poisson random variables with explicit means; and (b) typical existence of cycles of linear length in $G\sim\cG(n,p)$ when $p=c/n$ for any $c>1$, as well as Hamilton cycles in $G\sim\cG(n,d)$.

Our results here demonstrate that the existence of cycle whose lengths are at the extreme ends of this spectrum dominate the behavior of the set $\sL(G)$ of all cycle lengths appearing in these random graphs: We~find the probability that $\sL(G)$ contains the entire range from a given fixed $\ell$ all the way to $n$ in $\cG(n,d)$, or to $(1-\epsilon)L$ where $L$ is the length of a longest cycle in $\cG(n,p)$, converges to a limit $0<\theta<1$ as $n\to\infty$.
Define the quantity $0<\theta(c,\ell)<1$ to be
\begin{equation}\label{eq:theta-def}
\theta(c,\ell) := \prod_{k=\ell}^\infty(1-e^{-c^k/(2k)})\qquad\mbox{for $c>1$ and $\ell\geq 3$}\,.
\end{equation}
(We use $\llb a,b\rrb$ to denote $\{k\in\Z: a\leq k \leq b\}$;  an event $E_n$  holds \emph{with high probability} (w.h.p.) if $\P(E_n)\to 1$.)

\begin{maintheorem} \label{thm:G(n,d)}
For every fixed $d\geq 3$, the random regular graph $G\sim\cG(n,d)$ satisfies that for every fixed $\ell\geq 3$,
\begin{equation}\label{eq:Gnd-cycles-from-constant} \lim_{n\to\infty}\P( \llb \ell ,n\rrb\subset\sL(G)) = \theta(d-1,\ell)\,.\end{equation}
In particular, $G$ contains all cycle lengths between $\ell$ and $n$ with probability at least $1-2e^{-(d-1)^\ell/(2\ell)}$.
\end{maintheorem}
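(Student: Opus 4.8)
The plan is to split the target interval $\llb\ell,n\rrb$ into a \emph{short} block $\ell\le k\le m$ (with $m$ a large constant chosen at the end), a \emph{moderate} block $m<k\le c\log n$ (with $c=c(d)$ a large constant), and a \emph{long} block $c\log n<k\le n$, and to treat these by the Poisson approximation for short cycle counts, by the second moment method, and by a rotation/expansion argument, respectively. Throughout set $\lambda_k:=(d-1)^k/(2k)$, so that $\theta(d-1,\ell)=\prod_{k\ge\ell}(1-e^{-\lambda_k})$. The short block already gives the upper bound: since $\{\llb\ell,n\rrb\subseteq\sL(G)\}\subseteq\{Z_k\ge1\text{ for }\ell\le k\le m\}$ and, for fixed $m$, the vector $(Z_\ell,\dots,Z_m)$ converges in law to independent $\Po(\lambda_\ell),\dots,\Po(\lambda_m)$, one gets $\P(Z_k\ge1\text{ for }\ell\le k\le m)\to\prod_{k=\ell}^m(1-e^{-\lambda_k})$; letting $m\to\infty$ yields $\limsup_n\P(\llb\ell,n\rrb\subseteq\sL(G))\le\theta(d-1,\ell)$.

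For the matching lower bound one uses
\[
\P\big(\llb\ell,n\rrb\subseteq\sL(G)\big)\ \ge\ \P\big(Z_k\ge1\text{ for }\ell\le k\le m\big)\ -\ \P\big(\exists\,k\in\llb m+1,n\rrb\setminus\sL(G)\big),
\]
so it suffices to bound the last probability by $\delta(m)+o(1)$ with $\delta(m)\to0$ as $m\to\infty$. On the moderate block I would apply the second moment method uniformly: working in the configuration model (and using its contiguity with $\cG(n,d)$), $\E Z_k=(1+o(1))\lambda_k$, while pairs of $k$-cycles that share a vertex contribute $O(\lambda_k^2\,k^{O(1)}/n)$ to $\var(Z_k)$, so that $\P(Z_k=0)\le\var(Z_k)/(\E Z_k)^2\le(1+o(1))/\lambda_k+o(1/\log n)$ uniformly over the block. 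Since $\lambda_k$ grows geometrically in $k$ and the block has at most $c\log n$ terms, the probability that some length in the moderate block is missing is at most $\delta(m)+o(1)$ with $\delta(m)\to0$.

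It then remains to show the long block is present w.h.p.---i.e.\ $\llb c\log n,n\rrb\subseteq\sL(G)$ with probability $1-o(1)$---and this is the crux. I would use two properties holding w.h.p.\ for $G\sim\cG(n,d)$ with $d\ge3$: (i) $G$ is Hamiltonian (a classical fact quoted in the introduction), and (ii) $G$ is a vertex-expander, with $|N_G(S)|\ge\alpha|S|$ for every $S$ of size at most $n/2$ and a constant $\alpha=\alpha(d)>0$. From (ii), P\'osa-type rotation--extension arguments---of the kind underlying modern proofs of Hamiltonicity in sparse (pseudo)random graphs---yield, between suitable adjacent pairs of vertices, connecting paths of \emph{every} length in a long interval $[c\log n-1,\,(1-o(1))n]$; closing each such path with an edge realizes cycles of every length in $[c\log n,\,(1-o(1))n]$, and the remaining window $[(1-o(1))n,\,n]$ is covered using the absorbing structure already present in the proof of Hamiltonicity (or a similar device). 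Alternatively, one may invoke an off-the-shelf pancyclicity theorem for sparse (pseudo)random graphs asserting $\sL(G)\supseteq\llb\omega(1),n\rrb$ w.h.p. I expect this structural step to be the main obstacle: unlike the Poisson and second-moment steps it is far from routine, needing care both with the rotation/absorption machinery and with the $o(n)$-sized set of exceptional vertices.

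Combining the three blocks, for each fixed $m$ one gets $\liminf_n\P(\llb\ell,n\rrb\subseteq\sL(G))\ge\prod_{k=\ell}^m(1-e^{-\lambda_k})-\delta(m)$, and letting $m\to\infty$ gives $\liminf_n\P(\llb\ell,n\rrb\subseteq\sL(G))\ge\theta(d-1,\ell)$, which together with the $\limsup$ bound proves \eqref{eq:Gnd-cycles-from-constant}. Finally, $\theta(d-1,\ell)\ge1-\sum_{k\ge\ell}e^{-\lambda_k}$; since $\lambda_{k+1}/\lambda_k=(d-1)k/(k+1)\ge3/2$ and $\lambda_\ell\ge\lambda_3=(d-1)^3/6\ge4/3$ for all $d\ge3$ and $k\ge\ell\ge3$, we have $\lambda_k\ge(3/2)^{k-\ell}\lambda_\ell$, whence a short geometric estimate gives $\sum_{k\ge\ell}e^{-\lambda_k}\le2e^{-\lambda_\ell}=2e^{-(d-1)^\ell/(2\ell)}$, which is the claimed bound.
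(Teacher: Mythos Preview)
Your treatment of the short block and the upper bound is correct and matches the paper's argument. Your moderate block via the second moment is plausible (though you would need to control the variance uniformly for $k$ growing with $n$, which requires some care but is doable for $k=O(\log n)$). The genuine gap is the long block.

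The rotation--extension sketch does not deliver what you claim. P\'osa rotations together with vertex expansion produce \emph{long} paths and cycles; they do not, on their own, produce paths of \emph{every} prescribed length between a given pair of endpoints. The strongest result of this flavor in the literature (Friedman--Krivelevich, cited in the introduction) shows that a $(t,\alpha)$-expander contains an interval of cycle lengths of size $\delta n$ where $\delta$ depends on the expansion parameters; for $d=3$ this $\delta$ is a fixed constant strictly less than $1$, so you would only get $\llb c\log n,\delta n\rrb$, not $\llb c\log n,(1-o(1))n\rrb$. Worse, even granting the latter, the near-Hamiltonian range $\llb (1-o(1))n,n\rrb$ cannot be handled by a vague appeal to ``absorbing structure'': proving that $\cG(n,3)$ contains an $(n-k)$-cycle for each fixed $k$ is essentially as delicate as proving Hamiltonicity itself, and there is no off-the-shelf pancyclicity statement for $\cG(n,d)$ with $d$ fixed---indeed, the statement $\llb\omega_n,n\rrb\subset\sL(G)$ w.h.p.\ is a corollary of the theorem you are trying to prove, so invoking it is circular.

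The paper's route is quite different. For the bulk range $\llb\omega_n,n-\omega_n\rrb$ it uses contiguity to pass from $\cG(n,3)$ to the model $\cH(n)+\cG(n,1)$ (Hamilton cycle plus an independent uniform perfect matching), and then a switching argument: the matching edges are exposed in two phases, the first creating a large family of ``closing'' edge slots $F_{e,\ell}$, the second hitting one of them with exponentially small failure probability in~$\ell$. This yields $\P(\{\ell,n-\ell+4\}\not\subset\sL)\le Ce^{-c\ell}$ for every $4\le\ell\le n/2$, which after a union bound and contiguity gives the whole middle block at once---no second-moment step is needed. For the near-$n$ range $\llb n-\omega_n,n\rrb$ the paper gives a separate argument: couple $\cG(n,3)$ to $\cG(n-2\ell,3)$ by deleting $\ell$ random matching edges and resplicing, and then invoke the Robinson--Wormald theorem that a random cubic graph has a Hamilton cycle avoiding (or including) $o(\sqrt n)$ prescribed random edges. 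That step is the real substitute for your ``absorbing structure'', and it does not follow from expansion or rotation alone.
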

Taking $\ell\to\infty$ in the above theorem shows  
$ \llb \omega_n,n\rrb\subset\sL(G) $ w.h.p.\ for every  $\omega_n$ with $\lim_{n\to\infty}\omega_n=\infty$.

\begin{maintheorem} \label{thm:G(n,p)}
There exists some $C_0>0$ so that, if $G\sim \cG(n,p)$ where $p=\frac{c}{n}$ for almost every $c>C_0$ fixed, then for every fixed $\ell\geq 3$ and any fixed $\epsilon>0$,
\begin{equation}\label{eq:Gnp-cycles-from-constant} \lim_{n\to\infty}\P( \llb \ell ,(1-\epsilon)\circum(G) \rrb\subset\sL(G)) = \theta(c,\ell)\,.\end{equation}
In addition, if $G\sim\cD(n,p)$ where $p=\frac{c}n$ with $c>C_0$ fixed, then the analog of~\eqref{eq:Gnp-cycles-from-constant} holds true 
with respect to the modified quantity $\theta'(c,\ell):=\prod_{k\geq \ell}(1-\exp(-c^k/k))$.
\end{maintheorem}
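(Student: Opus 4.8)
\medskip
\noindent\textbf{Step 1: reduction to a local part and a robust part.}
The plan is to isolate the contribution of the short cycles, which accounts for the limit $\theta(c,\ell)$, from a ``robust'' statement covering all longer lengths that will hold with high probability on its own. Fix $\epsilon>0$ and $\ell\ge3$, and let $\omega_n\to\infty$ grow sufficiently slowly, say $\omega_n=\lceil 2\log_c\log\log n\rceil$. Setting
\[
A_n=\{\llb\ell,\omega_n\rrb\subset\sL(G)\},\qquad
B_n=\{\llb\omega_n,(1-\epsilon)\circum(G)\rrb\subset\sL(G)\},
\]
and using that $\circum(G)=\Theta(n)$ w.h.p., one has $\{\llb\ell,(1-\epsilon)\circum(G)\rrb\subset\sL(G)\}=A_n\cap B_n$ up to an event of probability $o(1)$, so that $\P(A_n)-\P(B_n^c)-o(1)\le\P(\llb\ell,(1-\epsilon)\circum(G)\rrb\subset\sL(G))\le\P(A_n)+o(1)$. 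The classical Poisson approximation for cycle counts --- valid far below the length scale $\log n/\log\log n$ at which it breaks down, hence in particular for lengths $\le\omega_n$ --- gives $\P(A_n)\to\prod_{k\ge\ell}(1-e^{-c^k/(2k)})=\theta(c,\ell)$, the infinite tail contributing $1$ since $c^k/(2k)\to\infty$ super-exponentially. Thus \eqref{eq:Gnp-cycles-from-constant} reduces to proving $\P(B_n)\to1$, and the same reduction applies verbatim to $\cD(n,p)$ with directed cycle counts, whose Poisson means are $c^k/k$, giving $\theta'(c,\ell)$.

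\medskip
\noindent\textbf{Step 2: the robust interval at short lengths.}
It remains to show that w.h.p.\ \emph{every} integer in $\llb\omega_n,(1-\epsilon)\circum(G)\rrb$ occurs as a cycle length; this is where all the work lies. For lengths up to about $\log_c n$ I would use Janson's inequality on the increasing events ``a given potential $k$-cycle is present'': the mean number of $k$-cycles is $\mu_k=(1+o(1))c^k/(2k)$, the pair term is $\Delta_k=O(kc^k\mu_k/n)=o(\mu_k)$ as long as $k\le(1-\delta)\log_c n$, and hence $\P(Z_k=0)\le e^{-\mu_k/2}$. Since $\mu_{\omega_n}\to\infty$ and the $\mu_k$ grow geometrically, $\sum_{k\ge\omega_n}e^{-\mu_k/2}\to0$, so a union bound puts all lengths in $\llb\omega_n,(1-\delta)\log_c n\rrb$ in $\sL(G)$ w.h.p. (For $k$ a bounded multiple of $\log_c n$, where the $k$-cycles cluster too much for this to apply directly, one first sparsifies $G$ to reduce the effective mean degree; the same estimate then gives $\Delta_k=o(\mu_k)$ with $\mu_k$ still polynomially large.) For directed cycles the computation is identical with $\mu_k=(1+o(1))c^k/k$.

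\medskip
\noindent\textbf{Step 3: the robust interval at long lengths.}
Since every cycle of $G$ lies in its $2$-core $G_2$, we have $\sL(G)=\sL(G_2)$ and $\circum(G)=\circum(G_2)$, so I would work in $G_2$. Suppressing its degree-$2$ vertices yields a length-weighted multigraph $\hat G$ on the branch vertices which, for $c\ge C_0$, is a genuine expander ($N_{\hat G}(S)\ge2|S|$ for $|S|\le|V(\hat G)|/2$) with $\Theta(n)$ edges, most of weight~$1$ and a positive density of weight~$\ge2$; a cycle of $G_2$ of length $t$ is the same thing as a cycle of $\hat G$ of total weight $t$. Using the known asymptotics of $\circum(\cG(n,c/n))$, one first produces a cycle $C_0$ of $\hat G$ of total weight at least $(1-\epsilon/2)\circum(G)$ that avoids a linear-sized set $R$ of branch vertices, and then runs a P\'osa-rotation argument in $\hat G$ to shorten $C_0$ one step at a time: given a current cycle of weight $t$, isolate a portion of weight a suitable multiple of $\log_c n$ and reroute it --- through $R$ and the weight-$\ge2$ edges there --- to weight exactly $t-1$. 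Iterating downward from the weight of $C_0$ to weight $\Theta(\log_c n)$ realises every length in that interval, which meets the Janson range and reaches $(1-\epsilon)\circum(G)$ with room to spare, yielding $\P(B_n)\to1$. The directed case is the verbatim analogue, using the suppressed $2$-core with expansion through both in- and out-neighbourhoods and the directed form of rotations.

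\medskip
\noindent\textbf{The main obstacle.}
The hard part is Step~3, and within it two points. Reaching the \emph{top} of the range is the heaviest: one must exhibit a near-longest cycle sitting inside the well-expanding kernel and disjoint from a linear reservoir, and then guarantee that the length-decreasing surgery succeeds \emph{simultaneously} at all $\Theta(n)$ steps, which forces the expansion used to be stable under large deletions --- and it is exactly the fine asymptotics of $\circum(\cG(n,c/n))$ needed to align the reachable top length with $(1-\epsilon)\circum(G)$ that restrict the undirected statement to \emph{almost every} $c>C_0$, whereas the directed circumference is regular enough in $c$ to dispense with any exceptional set. The secondary nuisance is \emph{meshing the scales}: the Janson range stops near $\log_c n$ while the rotation range may bottom out at a larger multiple of $\log_c n$, so one must push the rotations down to $(1+o(1))\log_c n$, or enlarge the Janson range by sparsification, and check the two ranges overlap for every fixed $c\ge C_0$.
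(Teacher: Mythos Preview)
Your Step~1 is exactly the paper's decomposition: Poisson convergence for cycle counts handles $\llb\ell,\omega_n\rrb$ and gives the constant $\theta(c,\ell)$, and the rest of the work is showing $\P(B_n)\to1$.

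Where you diverge is in how $B_n$ is obtained. You propose Janson's inequality up to $\sim\log_c n$ and then a kernel/expander/P\'osa-rotation machine to step the cycle length down by $1$ from a near-longest cycle. The paper does neither. Instead it uses a single sprinkling move: write $p=p'+p''(1-p')$ with $p'=c'/n$ for some $c'<c$, expose $G'\sim\cG(n,p')$ first, take a cycle $\sfC_m$ in $G'$ of length $m\geq(1-\epsilon)\circum(G)$ (this is where the Anastos--Frieze limit $\circum/n\to f(c)$ and its continuity in $c$ are used, whence the ``almost every $c$''), and observe that the $p''$-edges restricted to $V(\sfC_m)$ dominate $\cH(m)\oplus\cG(m,\delta/m)$ for some $\delta>0$. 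The paper then proves a clean deterministic-looking lemma (their Theorem~\ref{thm:Ham-plus-Gnp}): a Hamilton cycle on $m$ vertices plus $\cG(m,\delta/m)$ contains $\llb\ell,m-\ell+4\rrb\subset\sL$ with probability $1-Ce^{-c\delta^2\ell}$, via a two-round switching argument (first find many edges $e$ that ``jump'' along $\sfC_m$, building a large set $S'$ of candidate closing edges; then a second sprinkle hits $S'$). Summing over $\ell\geq\omega_n$ gives $B_n$ w.h.p.\ in one stroke, and your Step~2 and the meshing problem you flag simply do not arise.

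Your Step~3 as written is not yet a proof. Two of its ingredients are genuinely nontrivial and not supplied: (i) producing a cycle of weight $(1-\epsilon/2)\circum(G)$ that \emph{avoids} a linear reservoir in the kernel --- the known circumference results give a long cycle, not one disjoint from a prescribed linear set; and (ii) a rotation that changes total weight by \emph{exactly} one at every step, simultaneously for $\Theta(n)$ steps --- P\'osa rotations naturally produce many endpoints but do not control the length change to a single unit, especially once edge-weights $\geq2$ are in play. These are the obstacles you yourself name, and they are real; the paper sidesteps both by never working in the kernel at all. If you want to pursue your route, the honest statement is that Step~3 is a programme rather than an argument; the sprinkling-plus-switching route is both shorter and complete.
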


Taking $\ell\to\infty$ here yields
$ \llb \omega_n,(1-\epsilon)\circum(G)\rrb\subset\sL(G) $ w.h.p.\ for every $\omega_n$ with $\lim_{n\to\infty}\omega_n=\infty$.

\begin{remark}\label{rem:any-c>1}
Our results on $\cG(n,p),\cD(n,p)$, though presented in Theorem~\ref{thm:G(n,p)} for $p=\frac{c}n$ with a.e.\ $c>C_0$, 
address the entire supercritical regime $c>1$. In lieu of the range $\llb\ell,(1-o(1))\circum(G)\rrb$ in that theorem, one puts $\llb\ell,(1-o(1))\circum(G')\rrb$ for an analogous random graph $G'$ with edge probability $p'=(1-o(1))p$ (see Corollary~\ref{cor:G(n,p)-G'}). In particular, whenever $\circum(G)/n$ converges in probability to a left continuous limit~$f(c)$ (known to hold for a.e.\ $c>C_0$, see~\S\ref{sec:related-work}), one can further replace $\circum(G')$ by $(1-o(1))\circum(G)$, as above.
\end{remark} 

\begin{remark}\label{rem:slightly-supercritical}
Considering $\cG(n,p),\cD(n,p)$ for $p=c/n$ with $c=1+\epsilon$ for a sufficiently small $\epsilon>0$, and with the previous remark in mind, we can deduce that $G\sim\cG(n,p)$ has 
$ \P(\llb\ell,(\tfrac43-o(1))\epsilon^2 n\rrb\subset\sL(G)) \to \theta(c,\ell)$
for every fixed $\ell\geq 3$, and the same holds for $G\sim\cD(n,p)$ with the limiting constant $\theta'(c,\ell)$. 
Furthermore, one can replace $\frac43$ by any constant $\gamma_0$ that is known to lower bound $\circum(G)/(\epsilon^2 n)$ in probability; see Theorem~\ref{thm:G(n,p)-D(n,p)-slightly-supercritical}. 
\end{remark}

\subsection{Related work}\label{sec:related-work}
Following is an account, by no means exhaustive, of related results on the distribution of cycles in random graphs. For more information, the reader is referred to~\cite{Bollobas01,JLR00} and the references therein.

The random variables counting the number short cycles in $\cG(n,d)$ and $\cG(n,p)$ are well-known to be asymptotically independent Poisson. This was first established in $G\sim\cG(n,d)$ for $d\geq 3$ fixed by
Bollob\'{a}s~\cite{Bollobas80} and by Wormald~\cite{Wormald81}, where it was shown that for every integer $K$, the joint law of the random variables counting the number of $k$-cycles in $G$ for $k=3,\ldots,K$ converges weakly to the law of independent Poisson random variables $(Z_{k})_{k=3}^K$ with respective means $\lambda_k=(d-1)^k/(2k)$. The analogous statement for $\cG(n,p)$ when $p=c/n$ for fixed $c>0$ was shown by Bollob\'as in 1981 (see~\cite[\S4.1]{Bollobas01}) and independently by Karo\'nski and Ruci\'nski~\cite{KR83} with respect to (w.r.t.) $\lambda_k = c^k/(2k)$. One can immediately recognize the limiting probabilities $\theta(d-1,\ell)$ in~\eqref{eq:Gnd-cycles-from-constant} and $\theta(c,\ell)$ in~\eqref{eq:Gnp-cycles-from-constant} as the probability that $\bigcap_{k=\ell}^K \{ Z_{k} \neq 0\}$ in the respective limit as $K\to\infty$.

At the other extreme, longest  cycles in $\cG(n,d)$ and $\cG(n,p)$ were the subject of intensive study.
In $\cG(n,p)$ when $p=c/n$ for fixed $c>1$, works by Bollob\'as~\cite{Bollobas82} and by Bollob\'as, Fenner and Frieze~\cite{BFF84} culminated in the result of Frieze~\cite{Frieze86} that w.h.p.\ there exists a cycle in $G$ going through all but $(1+\epsilon_c)ce^{-c}n$ vertices, where $\epsilon_c\to 0$ as $c\to\infty$ (note that w.h.p. $G$ has $(1-o(1))ce^{-c} n$ vertices of degree 1, thus this is sharp up to~$\epsilon_c n$). A directed analog of this result in $\cD(n,p)$ was derived by the last two authors and Sudakov in~\cite{KLS13}.

For~$\cG(n,d)$, the longstanding conjecture that the graph is Hamiltonian w.h.p.\ for every fixed $d\geq 3$ was finally settled in the seminal works of Robinson and Wormald \cite{RW92,RW94}, which introduced the small subgraph conditioning  (\textsc{ssc}) method. These were followed by the paper of Janson~\cite{Janson95}, demonstrating how the \textsc{ssc} method allows one both to recover the distribution of the number of Hamilton cycles,
and, remarkably, to show contiguity of models of random regular graphs. Our analysis of $\cG(n,d)$ will rely on these results.

Letting $\circum(G)$ denote the length of a longest cycle in $G$ (its circumference), $\circum(G) / n$ is expected to converge in probability when $p=c/n$ for every fixed $c>1$, yet till recently this was not known for \emph{any}~$c>1$.
Anastos and Frieze~\cite{AF19} then proved that this holds when $c>C_0$ for some absolute constant $C_0$, and further identified the limit $f(c)$. The analogous result for $\cD(n,p)$ was thereafter obtained by the same authors in~\cite{AF20}. For $c=1+o(1)$ outside the critical window, $\circum(G)$ is known up to constant factors~\cite{Luczak91b}; see Remark~\ref{rem:constants-gamma0-gamma1}.

For $G\sim\cG(n,p)$ in the denser regime, Cooper and Frieze~\cite{CF87} proved that
 if $np - \log n - \log \log n \rightarrow \infty$ then w.h.p.\ $\sL(G) = \llb 3,n \rrb$, a property referred to as \emph{pancyclicity}.
{\L}uczak~\cite{Luczak91a} obtained that if $np \rightarrow \infty$, then for every fixed $\epsilon>0$, the graph $G$ contains all cycle lengths up to $n-(1+\epsilon)N_1$ w.h.p., where $N_1$ is the number of vertices of degree 1 in $G$. Cooper~\cite{Cooper91,Cooper92} later proved that if $np - \log n - \log \log n \rightarrow \infty$ then w.h.p.\ $G\sim \cG(n,p)$ contains a Hamilton cycle $H$ such that, for every $3\leq \ell \leq n-1$, one can construct a cycle of length $\ell$ in $G$ that contains only edges of $H$ and at most one additional edge.

Recently, Friedman and Krivelevich~\cite{FK20} studied $\sL(G)$ for certain classes of expander graphs $G$ on $n$ vertices, showing that $\sL(G)$ then contains an interval of $\delta n$ cycle lengths, for a constant $\delta>0$ that depends on the expansion parameters. Combined with well-known results on expansion in random graphs, this implies that  for every $\delta$ there exists $c_0$ such that for $c>c_0$, $d>c_0$, the graphs $G\sim\cG(n,c/n)$, $G\sim\cG(n,d)$ w.h.p.\ have that the set $\sL(G)$ of cycle lengths contains an interval of length $(1-\delta)n$ (see~\cite{FK20} for further details).

\subsection{Proof techniques}
For Theorem~\ref{thm:G(n,d)}
when $n$ is even, the aforementioned contiguity results reduce the model to the union of a Hamilton cycle and an independent uniform perfect matching. Theorem~\ref{thm:Ham-plus-PM} shows that such a random graph contains a cycle of length $\ell(n)$ with probability $1-O(\exp[-c\min(\ell,n-\ell)])$, for an absolute constant $c>0$. (A similar result holds for a union of two independent uniform Hamilton cycles, pertinent to the case of $n$ odd). The proof relies on a switching argument akin to the approach of~\cite{Luczak91a}; here, the matching edges play two roles: (I) a fraction of them, together with the Hamilton cycle, creates a large set of ($\ell-1$)-paths; (II) another fraction of those is then used to close an $\ell$-cycle. 
 Theorem~\ref{thm:G(n,p)} is proved by an analogous analysis for a union of a long cycle and  $\cG(n,\frac\delta n)$  (Theorem~\ref{thm:Ham-plus-Gnp}) or $\cD(n,\frac\delta n)$   (Theorem~\ref{thm:Ham-plus-Dnp}).

\section{random regular graphs}

Our proof will be derived from the following ingredients via contiguity properties of random regular graphs. 
The first (and main) ingredient will treat cycles in $\cG(n,3)$ whose lengths are in the range $\llb\omega_n,n-\omega_n\rrb$ for any $\omega_n\gg 1$, by means of studying the contiguous model $\cH(n)+\cG(n,1)$, the 3-regular multigraph on $n$ vertices obtained from the union of a Hamilton cycle and an independently and uniformly chosen perfect matching.
\begin{theorem} \label{thm:Ham-plus-PM}
Let $G\sim\cH(n)+\cG(n,1)$ be the random cubic $n$-vertex multigraph ($n$ even) which is the union of a Hamilton cycle and an independently chosen uniform perfect matching.
There are absolute constants $C,c>0$ so that, for any $4 \leq \ell \leq n/2$, we have $ \llb \ell, n-\ell+4 \rrb \subset \sL(G) $ with probability at least $1-C \exp(-c\ell)$. 
The same holds for $n$ odd when $G\sim \cH(n)+\cH(n)$, a union of two independent uniform Hamilton cycle.
\end{theorem}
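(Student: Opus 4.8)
\textbf{Proof proposal.}
The plan is to reduce the statement to a single-length estimate and then sum a geometric tail. Concretely, I would establish that for every target length $m$ with $\ell\le m\le n-\ell+4$,
\[
\P\bigl(m\notin\sL(G)\bigr)\;\le\;C\exp\!\bigl(-c\min(m,\,n-m+4)\bigr)
\]
for absolute constants $C,c>0$; splitting the range $\llb\ell,n-\ell+4\rrb$ at $n/2$ and summing the two resulting geometric tails then yields $\P\bigl(\llb\ell,n-\ell+4\rrb\not\subset\sL(G)\bigr)\le C'e^{-c\ell}$, as claimed. By symmetry we may fix $\cH(n)$ to be the cycle $v_1\cdots v_n v_1$, so that the only randomness is the uniform perfect matching $M$ (for $n$ odd the uniform second Hamilton cycle plays exactly the same role). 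There are two regimes, treated identically: the \emph{short} regime $m\le n/2$, in which an $m$-cycle is assembled from arcs of $\cH$ joined cyclically by chords of $M$, and the \emph{long} regime $m>n/2$, in which one takes $\cH$ and short-circuits away $n-m$ of its vertices using chords of $M$. I describe the short regime.

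\smallskip

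Let $X_m$ denote the number of $m$-cycles of $G$. The first-moment input is routine: an $m$-cycle using $j$ chords of $M$ must consist of exactly $j$ arcs of $\cH$ (each of length $\ge1$) alternating cyclically with those $j$ chords, and counting these shows that $\E[X_m]$ is exponentially large in $m$ --- when $m=o(n)$ it is of order $\varphi^{m}/m$ with $\varphi=(1+\sqrt5)/2$ (a Fibonacci-type sum $\sum_{j}\binom{m-j-1}{j-1}$ over the number $j$ of chords used, dominated by $j=\Theta(m)$), and it stays $\exp(\Omega(m))$ throughout the short regime; in the long regime one gets $\E[X_m]=\exp(\Theta(\sqrt{n-m}))$ by summing over the sizes of the $n-m$ short-circuited blocks. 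Since $\E[X_m]$ dwarfs any fixed multiple of $\min(m,n-m)$, it suffices to prove a concentration bound of the form $\P(X_m=0)\le\exp(-\Omega(\E[X_m]))$ (or anything comparable). Where it applies this is immediate from Janson's inequality: with $\Delta_m=\sum\P(F,F'\text{ both present})$ over ordered pairs of potential $m$-cycles sharing a chord, one has $\P(X_m=0)\le\exp\bigl(-\E[X_m]^2/(2\E[X_m]+2\Delta_m)\bigr)$, which is at most $Ce^{-cm}$ provided $\Delta_m=O(\E[X_m])$; one checks $\Delta_m=\mathrm{poly}(m)\cdot\E[X_m]^2/n$, so this already covers all $m$ up to a small multiple of $\log n$, and symmetrically the long regime for $n-m$ up to a small multiple of $\log n$.

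\smallskip

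The main obstacle is the remaining range $\log n\lesssim m\le n/2$ (and its long-regime analogue), where $X_m$ clusters far too much for Janson applied to $X_m$ to say anything. Following the switching strategy of \cite{Luczak91a}, the resolution is to replace $X_m$ by the count of a carefully restricted family $\cF_m$ of ``canonical'' $m$-cycles, chosen so that two of them sharing a chord are genuinely scarce while the expected number present is still $\exp(\Omega(m))$ --- and this is precisely where the two roles of the matching edges enter. Most chords of a member of $\cF_m$ serve to lay down a skeleton of $\cH$-arcs realising a path of length $m-1$ between a prescribed pair of vertices, the arc-lengths being apportioned freely: this freedom is what both makes $\E[\#\cF_m]$ exponentially large and pins the realised path length to exactly $m-1$; a constant fraction of the remaining chords of $M$ are then available to close such a path into an $m$-cycle. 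Getting the family $\cF_m$ (equivalently, the switching operation on $M$) to have $\Delta_m^{\cF}\ll\E[\#\cF_m]$ while retaining $\E[\#\cF_m]=\exp(\Omega(m))$ is the technical heart of the proof. Everything else is routine by comparison: the two boundary effects that force the window $\llb\ell,n-\ell+4\rrb$ rather than $\llb3,n\rrb$ (most visibly, a $3$-cycle needs a chord of $\cH$-span exactly $2$ and so is present only with probability bounded away from $1$), the matching of the two regimes near $m\approx n/2$, and the verbatim repetition of the argument for $\cH(n)+\cH(n)$ with $M$ replaced by the independent second Hamilton cycle (which supplies $\Theta(n)$ chords of roughly uniform span, so the same counts go through up to constants).
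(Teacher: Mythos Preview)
Your plan has two genuine gaps. First, Janson's inequality is stated for increasing events in a product measure; the edge indicators of a uniform perfect matching are \emph{not} independent, so the step ``this is immediate from Janson's inequality'' is unjustified as written, and for subsets of $j=\Theta(m)$ matching edges with $m$ possibly linear in $n$ any such transfer is delicate. Second, and more seriously, you explicitly defer the entire main range $\log n\lesssim m\le n/2$ to an unspecified ``restricted family $\cF_m$'' whose defining properties ($\Delta_m^{\cF}=O(\E[\#\cF_m])$ while $\E[\#\cF_m]=\exp(\Omega(m))$) you label ``the technical heart of the proof'' without carrying them out. Since that range is essentially the whole theorem, what you have written is closer to a statement of what must be proved than a proof.

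The paper's argument is structurally different and much simpler. It never counts cycles with $\Theta(m)$ chords, never invokes Janson, and treats $\ell$ and $n-\ell+4$ simultaneously. The key combinatorial input is that \emph{two} matching chords suffice: if $e=(v_i,v_j)$ has span at least $\ell/2$ along $\sfC_n$ and $f$ lies in an explicit $(\lfloor\ell/2\rfloor-1)$-element set $F_{e,\ell}$ of partners, then $\sfC_n\cup\{e,f\}$ already contains both an $\ell$-cycle and an $(n-\ell+4)$-cycle. The matching is then exposed in two rounds. A first batch $M_1$ of $\lfloor n/16\rfloor$ edges builds the auxiliary graph $\cX_{M_1}=\bigcup_{e\in M_1\cap E_\ell}F_{e,\ell}$, shown (via a simple supermartingale bound on degree losses) to retain $\Omega(n\ell)$ edges among still-unmatched vertices except with probability $e^{-\Omega(n)}$. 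A second batch of $\lfloor n/500\rfloor$ edges is then revealed one at a time, each from a maximum-degree vertex of the current remnant of $\cX_{M_1}$, so each new match lands in $E(\cX_{M_1})$ with probability $\Omega(\ell/n)$; after $\Theta(n)$ attempts this gives $\P(\{\ell,n-\ell+4\}\not\subset\sL(G))\le e^{-\Omega(\ell)}$ directly, with no second-moment or correlation analysis at all. The odd-$n$ case reuses the same argument, treating the second Hamilton cycle as a source of matching edges.
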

While the above theorem shows that $\llb 4,n\rrb\subset\sL(G)$ with a probability that is uniformly bounded away from $0$, its estimate on this probability is not sharp. To obtain the correct limiting probability for this event (and more generally, for the event $\{\llb\ell,n\rrb\subset\sL(G)\}$ for any fixed $\ell$), we must treat large cycles more carefully.
Namely, the range $\llb n-\omega_n,n\rrb$ is treated by the next theorem, proved via a reduction to a result of Robinson and Wormald~\cite{RW01} on Hamilton cycles avoiding a set of random edges while including another such set.
\begin{theorem}\label{thm:G(n,d)-near-n} Let $G\sim\cG(n,d)$ for $d\geq 3$ fixed. There exists some sequence $\omega_n$ going to infinity with $n$ (sufficiently slowly) such that $\llb n-\omega_n,n\rrb\subset\sL(G)$ w.h.p.
\end{theorem}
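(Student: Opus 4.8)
The plan is to show that for every \emph{fixed} integer $k\ge0$ one has $\P\big(n-k\in\sL(G)\big)\to1$, and then to promote this to a slowly growing $\omega_n$ by a standard diagonalization: a finite intersection of w.h.p.\ events is w.h.p., so for each $j$ there is an $N_j$ (which we may take increasing in $j$) with $\P\big(\{n-k:0\le k\le j\}\subset\sL(G)\big)\ge1-1/j$ whenever $n\ge N_j$; then $\omega_n:=\max\{j:N_j\le n\}$ tends to infinity and $\P\big(\llb n-\omega_n,n\rrb\subset\sL(G)\big)\to1$. It therefore suffices to prove the per-$k$ statement, and the case $k=0$ is exactly the Hamiltonicity of $\cG(n,d)$ for fixed $d\ge3$ established by Robinson and Wormald~\cite{RW92,RW94}.

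Fix $k\ge1$. A cycle of length $n-k$ in $G$ is precisely a Hamilton cycle of $G-S$ for some $k$-set $S\subset V$, so it suffices to produce one $k$-set $S$ with $G-S$ Hamiltonian w.h.p.\ I take $S=\{v_1,\dots,v_k\}$ to be a uniformly random $k$-subset of $V$. With high probability $S$ is ``nice'': the $v_i$ are pairwise far apart and have pairwise disjoint, edge-free neighbourhoods (for $k$ fixed this holds automatically, since two random vertices of $\cG(n,d)$ lie within any fixed distance with probability $O(1/n)$, and a random vertex has an edge inside its neighbourhood with probability $O(1/n)$). Exposing the $O(1)$ edges incident to $\bigcup_i\big(\{v_i\}\cup N(v_i)\big)$ and conditioning on the nice event affects only $O(1)$ vertices, and a routine configuration-model computation shows that the conditional law of the residual graph $G-S$ coincides, up to an asymptotically negligible amount of conditioning, with that of $\cG(n-k,d)$ with a uniformly random partial matching $F$ of size $kd/2$ deleted: deleting the $k$ disjoint stars centred at the $v_i$ leaves exactly $kd$ vertices of degree $d-1$ at uniformly random positions, which is the same effect as deleting such an $F$.

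Now invoke the result of Robinson and Wormald~\cite{RW01} on Hamilton cycles in random regular graphs that avoid a random set of edges: it gives that $\cG(n-k,d)$ w.h.p.\ has a Hamilton cycle disjoint from $F$, i.e.\ a Hamilton cycle of $\cG(n-k,d)-F$. By the distributional comparison above, $G-S$ is therefore w.h.p.\ Hamiltonian; and a Hamilton cycle of $G-S$ uses only edges of $G$ and spans the $n-k$ vertices $V\setminus S$, so it is a cycle of length $n-k$ in $G$, as required. When $d$ is odd, $n$ is forced to be even and the argument goes through verbatim for $k$ even; for $k$ odd the number $kd$ of degree-$(d-1)$ vertices in $G-S$ is odd, so $G-S$ is not of the form ``$d$-regular minus a partial matching'', and one instead deletes an almost perfect matching from the deficient vertices together with one further random edge and appeals to the variant of~\cite{RW01} that also prescribes edges to be \emph{included}, or equivalently cites Hamiltonicity of the corresponding bounded-degree configuration model; this affects only $O(1)$ edges and does not change the outcome.

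The main obstacle is the distributional bookkeeping in the second paragraph: one must verify that, after conditioning on a nice random $k$-set and its incident edges, the residual graph genuinely falls within the scope of~\cite{RW01} — that the neighbourhoods vacated by the $v_i$ play the role of a uniformly chosen bounded edge set, that the unavoidable $O(1)$ conditioning and the simplicity constraints of the configuration model are absorbed without loss, and that the odd-$d$ parity case is dispatched cleanly by a bounded modification. Granting that, the remaining ingredients — the reduction to fixed $k$, the diagonalization, and the passage from a Hamilton cycle of $G-S$ to a cycle of length $n-k$ in $G$ — are all immediate.
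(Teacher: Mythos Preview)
Your high-level strategy matches the paper's: reduce to fixed $k$ via diagonalization, then for each $k$ couple to a smaller random regular graph and invoke the Robinson--Wormald result~\cite{RW01} on Hamilton cycles avoiding/including prescribed random edges. The execution, however, differs, and your version has a real (though repairable) gap in the distributional step you flag as ``the main obstacle.''

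The paper first reduces to $d=3$ by monotonicity, and then does \emph{not} delete random vertices. Instead it selects $\ell$ random half-edges, giving $\ell$ random edges $u_iu'_i$; it deletes the $2\ell$ endpoints $u_i,u'_i$ and reconnects each pair of dangling half-edges $x_iy_i$ and $x'_iy'_i$. The resulting graph $H$ is (on a w.h.p.\ event) \emph{exactly} distributed as $\cG(n-2\ell,3)$, and the $2\ell$ newly created edges $S_H$ are exactly a uniform set of ordered edges in $H$---precisely the input format of~\cite[Thm.~3(i)]{RW01}. A Hamilton cycle of $H$ avoiding $S_H$ uses only edges already present in $G$, so it is an $(n-2\ell)$-cycle in $G$. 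For odd $k=2\ell-1$ one asks the Hamilton cycle to \emph{include} the single edge $x_1y_1$ and avoid the rest; this produces a Hamilton path in $G$ from $x_1$ to $y_1$ missing $u_1$, which closes through $u_1$ to an $(n-2\ell+1)$-cycle. Parity is thus a one-line modification.

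Your route---delete a random $k$-set $S$ and claim $G-S\stackrel{d}{=}\cG(n-k,d)-F$ for $F$ a uniform $kd/2$-matching---is not correct as stated. In the configuration model, $G-S$ on the nice event is genuinely \emph{uniform} over configurations with $kd$ deficient half-edges on distinct vertices. But $\cG(n-k,d)-F$, when $F$ is chosen uniformly among $kd/2$-matchings \emph{of the graph}, is a size-biased version of this law (the weight involves $1/m_{kd/2}(G')$, the reciprocal of the number of such matchings in $G'$). The discrepancy is only $O(1/n)$ in total variation for fixed $k$, so it does not kill the argument, but ``coincides'' is too strong and you must also check that the \emph{joint} law of the pair $(\cG(n-k,d),F)$ you end up with matches the hypothesis of~\cite{RW01}. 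The clean fix is to take $F$ to be the edges through $kd/2$ uniform random \emph{half-edges} (rather than a uniform matching in the graph); then $\cG(n-k,d)-F$ is exactly uniform on the nice event and the pair is exactly in the~\cite{RW01} format---which is essentially what the paper's un-subdivision coupling achieves in one stroke. Your parity workaround for odd $kd$ is considerably vaguer than the paper's explicit include-one-edge trick and would need to be written out properly.
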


The above two theorems will be proved in Sections~\ref{sec:G(n,d)-almost-full} and~\ref{sec:G(n,d)-near-n}, resp.
\begin{proof}[\emph{\textbf{Proof of Theorem~\ref{thm:G(n,d)}}}]
Let $\widetilde\cG(n,d)$ denote the distribution over $d$-regular multigraphs obtained via the configuration model. It is well-known (see for instance~\cite[Thm.~9.30]{JLR00}) that $\cH(n)+\cG(n,1)$ is contiguous to $\widetilde\cG'(n,3)$, the conditional distribution of $\widetilde\cG(n,3)$ given there are no loops. With probability bounded away from $0$, a graph distributed as $\widetilde\cG'(n,3)$ has no multiple edges (namely, with probability $1/e+o(1)$; see, e.g.,~\cite[Thm.~9.5]{JLR00}), and on that event it is distributed as $\cG(n,3)$. 
Similarly, when $n$ is odd, it is known that $\cH(n)+\cH(n)$ is contiguous to $\widetilde\cG'(n,4)$ (see, e.g.,~\cite[Thm.~9.41]{JLR00}), which is distributed as $\cG(n,4)$ conditional on having no multiple edges, an event whose probability is bounded away from 0 
(namely, it is $e^{-9/4}+o(1)$).

Therefore, applying Theorem~\ref{thm:Ham-plus-PM} for $\ell$ going to infinity arbitrarily slowly, and using the monotonicity of $\cG(n,d)$ in $d$ w.r.t.\ increasing properties that hold with probability $1-o(1)$ (another consequence of contiguity of random regular graphs; see~\cite[Thm.~9.36(ii)]{JLR00}) we arrive at the conclusion that $G\sim\cG(n,d)$ has
\begin{equation}\label{eq:G(n,d)-almost-full}
\P\left(\llb \omega_n,n-\omega_n\rrb\subset \sL(G)\right)\to 1\qquad \mbox{for every sequence $\omega_n$ such that $\lim_{n\to\infty}\omega_n = \infty$}\,.
\end{equation}

The treatment of the regime of $\llb \ell,\omega_n\rrb$ will follow immediately from the convergence of the short cycle distribution of $\cG(n,d)$ to asymptotically independent Poisson random variables. If $Z_{n,k}$ counts the number of $k$-cycles in $G\sim\cG(n,d)$, it is well known that for every fixed $k\geq 3$, one has $Z_{n,k} \stackrel{{\rm d}}\to \Po(\lambda_k)$ as $n\to\infty$, where $\lambda_k = (d-1)^k/(2k)$, and moreover, the joint law $\{Z_{n,k}\}_{k\geq 3}$ weakly converges as $n\to\infty$ to the joint law of independent Poisson random variables $\{Z_{\infty,k}\}_{k\geq 3}$ where $\E Z_{\infty,k} =\lambda_k$ (see, for instance,~\cite[Cor.~9.6]{JLR00})). With this in mind, fix $\epsilon>0$ and let $\omega'_n$ be the maximal integer $K\geq \ell$ satisfying
\begin{equation}\label{eq:Z-convergence} \left|\P(Z_{N,\ell}>0,\ldots,Z_{N,K}>0)-\P(Z_{\infty,\ell}>0,\ldots,Z_{\infty,K}>0)\right| < \epsilon \qquad\mbox{for all $N\geq n$}\,.\end{equation}
By the preceding discussion, $\omega'_n\to\infty$ with $n$, and so \[ \lim_{n\to\infty}\P\bigg(\bigcap_{k=\ell}^{\omega'_n} \{Z_{\infty,k}>0\}\bigg)=
 \prod_{k=\ell}^\infty (1-e^{-\lambda_k})=\theta(d-1,\ell)\,.\]
It therefore follows that
\[ \theta(d-1,\ell)-\epsilon\leq \liminf_{n\to\infty}\P\left(\llb \ell,\omega_n'\rrb\subset\sL(G)\right) \leq \limsup_{n\to\infty}\P\left(\llb \ell,\omega_n'\rrb\subset\sL(G)\right) \leq \theta(d-1,\ell)+\epsilon\,.\]
Finally, let $\omega''_n$ be the sequence with $\llb n-\omega''_n,n\rrb\subset\sL(G)$ w.h.p., specified in the conclusion of Theorem~\ref{thm:G(n,d)-near-n}.
As $\omega_n := \omega'_n \wedge \omega''_n\to\infty$ with $n$, we have $\llb \omega'_n,n-\omega''_n\rrb$ w.h.p.\  by~\eqref{eq:G(n,d)-almost-full}; letting $\epsilon\downarrow 0$ completes the proof.
\end{proof}

\subsection{Proof of Theorem~\ref{thm:Ham-plus-PM}}\label{sec:G(n,d)-almost-full}
Let $V=V(G)=\{ v_0,v_1,\ldots,v_{n-1} \}$ be such that the Hamilton cycle specified in the definition of $\cH(n)+\cG(n,1)$ is the cycle $\sfC_n = \left( v_0 ,v_1, \ldots,v_{n-1},v_0 \right)$, and let $\sfM\sim\cG(n,1)$ be the uniform independent perfect matching added to it to form the multigraph $G$.
We will show that 
\begin{equation}\label{eq:Ham-PM-single-ell-bound} \P \left( \{ \ell, n-\ell +4	 \} \not\subset \sL(G) \right) \leq C e^{-c\ell}
\quad\mbox{for every $ 4 \leq \ell \leq n/2+2 $}
 \,,\end{equation}
with $C,c>0$ being absolute constants,
from which the main result easily follows by a union bound over $\ell$.
 
 Throughout this proof, identify an undirected edge $e=\{v_i,v_j\}$ with the ordered pair $(v_i,v_j)$, where the ordering is such that $j-i\pmod n \leq n/2$. Define
\begin{equation}\label{eq:def-E_ell} E_{\ell}:=\left\{ e=\left( v_i,v_j \right) \in \tbinom{V}{2} \;:\,  j-i \tpmod n  \geq \ell/2 \right\} \end{equation} 
(so that $|E_{\ell}| = \left( \lfloor \frac{n}{2} \rfloor - \lceil \frac{\ell}{2} \rceil \right)n$),
and for every $e = (v_i,v_j)\in E_\ell$, further define the $(\lfloor \ell /2 \rfloor - 1)$-element subset
\begin{equation}\label{eq:def-F-e-ell}
F_{e,\ell } := \left\{ \{ v_{i+k\tpmod n},v_{j+\ell - k - 2 \tpmod n} \} \in \tbinom{V}{2}\,:\; 1\leq k \leq \ell /2 - 1  \right\} \,.
\end{equation}
(Note that $F_{e,\ell}$ is not assumed to be a subset of $E_\ell$, and indeed, $F_{e,\ell}\not\subset E_\ell$ e.g.\ for $\ell\sim n/2$ and $i=1$, $j\sim\ell$.)
These definitions are motivated by the next fact, owing to the classical switching principle (see Fig.~\ref{fig:switching}).
\begin{observation}[switching]\label{obs:switching}
If $e\in E_\ell$ and $f\in F_{e,\ell}$ then $\sfC_n \cup \{e,f\}$ has cycles of length $\ell$ and $n-\ell+4$.
\end{observation}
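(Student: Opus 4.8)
The plan is to establish the observation by an explicit construction: I will exhibit the cycle of length $\ell$ and the cycle of length $n-\ell+4$ directly, reading off their lengths from the way the chords $e$ and $f$ cut $\sfC_n$ into arcs. First I would fix coordinates. Write $e=(v_i,v_j)$ and set $m:=(j-i)\bmod n$; the edge-orientation convention together with the membership $e\in E_\ell$ (cf.~\eqref{eq:def-E_ell}) gives $\lceil\ell/2\rceil\le m\le\lfloor n/2\rfloor$. Write the generic element of $F_{e,\ell}$ (cf.~\eqref{eq:def-F-e-ell}), with indices read modulo $n$, as $f=\{v_{i+k},\,v_{j+\ell-k-2}\}$ for some integer $1\le k\le\lfloor\ell/2\rfloor-1$, and abbreviate $a:=v_i$, $b:=v_{i+k}$, $c:=v_j$, $d:=v_{j+\ell-k-2}$.

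Next I would locate $a,b,c,d$ on $\sfC_n$. These four vertices appear in the cyclic order $a,b,c,d$ (in the direction of increasing indices), splitting $\sfC_n$ into the four arcs $a\to b$, $b\to c$, $c\to d$, $d\to a$ of edge-lengths $k$, $m-k$, $\ell-k-2$ and $n-m-\ell+k+2$ respectively (which sum to $n$). A short check shows each of these four lengths is at least $1$: this is clear for the first; the bounds $m\ge\lceil\ell/2\rceil$ and $k\le\lfloor\ell/2\rfloor-1$ give it for the second and third; and for the fourth one uses in addition $m\le\lfloor n/2\rfloor$ and $\ell\le n/2+2$ (the range relevant to~\eqref{eq:Ham-PM-single-ell-bound}), which together yield $m+\ell-k-2\le n-1$. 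Consequently $a,b,c,d$ are four distinct vertices, the four arcs are internally vertex-disjoint, and $e=\{a,c\}$, $f=\{b,d\}$ are a pair of ``crossing'' chords of $\sfC_n$.

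Finally I would write down the two cycles in $\sfC_n\cup\{e,f\}$ and read off their lengths. For the cycle of length $\ell$: traverse the arc $a\to b$, then the chord $f$ from $b$ to $d$, then the arc $c\to d$ backwards (from $d$ to $c$), then the chord $e$ from $c$ to $a$; since the two arcs used are internally vertex-disjoint and their endpoints $a,b,c,d$ are distinct, this is a genuine cycle, of length $k+1+(\ell-k-2)+1=\ell$. For the cycle of length $n-\ell+4$: traverse the arc $b\to c$, then the chord $e$ from $c$ to $a$, then the arc $d\to a$ backwards (from $a$ to $d$), then the chord $f$ from $d$ to $b$; likewise this is a cycle, of length $(m-k)+1+(n-m-\ell+k+2)+1=n-\ell+4$. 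These are simply the two ways of re-routing $\sfC_n$ through the pair of crossing chords $e,f$; see Figure~\ref{fig:switching}.

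I do not anticipate a genuine obstacle: once the cyclic ordering of $a,b,c,d$ is pinned down the two cycles are immediate. The only step that needs a moment's care is verifying that all four arc-lengths are positive --- in particular the last, $n-m-\ell+k+2$, which is where the bound $\ell\le n/2+2$ enters --- since that is precisely what ensures $a,b,c,d$ are four distinct vertices and that the two closed walks above are honest cycles of the stated lengths rather than shorter degenerate ones.
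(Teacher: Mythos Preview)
Your proof is correct and follows essentially the same route as the paper's: the paper simply sets $i=0$ without loss of generality, names the arcs $P_1,\ldots,P_4$, and reads off the two cycles $P_1,f,P_2,e$ and $P_3,e,P_4,f$ exactly as you do. Your treatment is in fact a bit more careful than the paper's in verifying positivity of all four arc-lengths (the paper just notes $j\ge\ell/2>k$ and $j+\ell-k-2<n$), but the underlying argument is identical.
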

\begin{proof}
Let $e\in E_\ell$, assume w.l.o.g.\ that $e=(v_0,v_j)$ for $\ell/2 \leq j\leq  n/2 $ and let $f = \{v_k,v_{j+\ell-k-2}\}$ for some $1 \leq k \leq \ell/2-1$ (notice $j+\ell-k-2 \leq n-k < n$ by our assumption on $\ell$). The paths $P_1=(v_0,v_1,\ldots,v_k)$ and $P_2=(v_{j+\ell-k-2},\ldots,v_{j+1},v_j)$ are disjoint since $j \geq \ell/2 > k$, whence the sequence $P_1,f,P_2,e$ is an $\ell$-cycle, while $P_3,e,P4,f$ forms an $(n-\ell+4)$-cycle for $P_3=(v_{j+\ell-k-2},\ldots,v_{n-1},v_0)$ and $P_4=(v_j,\ldots,v_{k+1},v_k)$.
\end{proof}
Next, for a matching $M' \subset \sfM$, we define the auxiliary graph $\cX_{M'}$ on our original vertex set $V$ via
\[ E(\cX_{M'})= \bigcup\left\{ F_{e,\ell}\,:\; e\in M' \cap E_\ell \right\}\,.\]

\begin{observation}\label{obs:max-deg}
For any matching $M'$, the maximum degree of $\cX_{M'}$ is at most $\ell-3$.
\end{observation}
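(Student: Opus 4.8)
The plan is to fix an arbitrary vertex $v_m\in V$ and bound the number of edges of $\cX_{M'}$ incident to it. Every such edge lies in $F_{e,\ell}$ for some matching edge $e=(v_i,v_j)\in M'\cap E_\ell$, so the first step is to describe a single set $F_{e,\ell}$ structurally: as $k$ runs over $\llb 1,\lfloor\ell/2\rfloor-1\rrb$, the first coordinate $v_{i+k}$ sweeps out the arc $I_e:=\{v_{i+1},\ldots,v_{i+\lfloor\ell/2\rfloor-1}\}$ while the second coordinate $v_{j+\ell-k-2}$ sweeps out the arc $J_e:=\{v_{j+\ell-\lfloor\ell/2\rfloor-1},\ldots,v_{j+\ell-3}\}$ (indices mod $n$); each of $I_e,J_e$ has $\lfloor\ell/2\rfloor-1$ vertices and $F_{e,\ell}$ is a perfect matching between them.

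I would then check that $I_e$ and $J_e$ are disjoint. A common vertex would force $j-i\equiv(k+k')+2-\ell\pmod n$ for some $k,k'\in\llb1,\lfloor\ell/2\rfloor-1\rrb$, hence $j-i\equiv t\pmod n$ for an integer $t$ with $4-\ell\le t\le 0$; reduced mod $n$ such a $t$ lies in $\{0\}\cup\llb n-\ell+4,n-1\rrb$, which is disjoint from $\llb\lceil\ell/2\rceil,\lfloor n/2\rfloor\rrb$ as soon as $\ell\le n/2+2$ — contradicting $e\in E_\ell$ together with the ordering convention $j-i\bmod n\le n/2$. The consequence I need is that for each fixed $e\in M'\cap E_\ell$, \emph{at most one} edge of $F_{e,\ell}$ is incident to $v_m$ (one precisely when $v_m\in I_e\cup J_e$, since $v_m$ cannot lie in both arcs). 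Hence $\deg_{\cX_{M'}}(v_m)$ is at most the number of $e=(v_i,v_j)\in M'\cap E_\ell$ with $v_m\in I_e\cup J_e$, equivalently with the ``lower endpoint'' index $i$ in the arc $A:=\llb m-\lfloor\ell/2\rfloor+1,\,m-1\rrb$ or the ``upper endpoint'' index $j$ in the arc $B:=\llb m-\ell+3,\,m-\ell+\lfloor\ell/2\rfloor+1\rrb$ (both mod $n$).

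The last step is an injection. To each contributing $e=(v_i,v_j)$ assign the vertex $v_i$ if $i\in A$, and otherwise the vertex $v_j$, whose index then necessarily lies in $B$. Since $M'$ is a matching, no vertex is an endpoint of two distinct edges of $M'$, so this assignment is injective and its image is contained in $\{v_a:a\in A\cup B\}$; therefore $\deg_{\cX_{M'}}(v_m)\le|A\cup B|$. Finally $A$ and $B$ are arcs of $\lfloor\ell/2\rfloor-1$ consecutive indices each, and the top of $B$ sits exactly $\ell-2\lfloor\ell/2\rfloor\in\{0,1\}$ below the bottom of $A$, so their union is the single arc $\llb m-\ell+3,\,m-1\rrb$ of exactly $\ell-3$ indices, giving $\deg_{\cX_{M'}}(v_m)\le\ell-3$.

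The disjointness of $I_e$ and $J_e$ is the one place an honest (though short) computation is needed, and the only place the hypotheses $e\in E_\ell$ and $\ell\le n/2+2$ enter. The other point to watch is that the cruder count which forgets the overlap of $A$ and $B$ yields only $2(\lfloor\ell/2\rfloor-1)$ — the desired $\ell-3$ for odd $\ell$ but one too large for even $\ell$ — so observing that $A$ and $B$ meet (overlapping in one index when $\ell$ is even, abutting when $\ell$ is odd) is precisely what gives the stated bound.
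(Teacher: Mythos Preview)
Your proof is correct and follows essentially the same approach as the paper's: both identify the arc $U=\{v_{m-\ell+3},\ldots,v_{m-1}\}$ of $\ell-3$ vertices that must contain an endpoint of any $e\in M'\cap E_\ell$ whose $F_{e,\ell}$ touches $v_m$, then use that $M'$ is a matching to bound the number of such $e$. Your version is more careful in one respect---you explicitly verify $I_e\cap J_e=\emptyset$ (hence that $F_{e,\ell}$ is a matching), whereas the paper simply asserts this parenthetically---and you make the union $A\cup B$ and its exact size explicit, but the underlying argument is the same.
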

\begin{proof}
Consider $v_i\in V$ and assume w.l.o.g.\ that $i=0$. Recall its definition in~\eqref{eq:def-F-e-ell} that, for any $e\in E_\ell$, the set $F_{e,\ell}$ cannot contribute a neighbor to $v_0$ unless $e$ is incident to some vertex among $U=\{v_{n-\ell+3},\ldots,v_{n-1}\}$ (in order to get $i+k=n$ we must have that $i\in\llb n-\ell/2+1,\ldots,n-1\rrb$ whereas to get $j+\ell-k-2=n$ we must have that $j\in\llb n-\ell+3,n-\ell/2+1\rrb$). Since $M'$ is a matching, it contains at most $\ell-3$ edges $e$ incident to $U$, and each such $F_{e,\ell}$ contributes at most one neighbor to $v_0$ (being itself a matching).
\end{proof}

{\begin{figure}
    \centering
  \pgfmathsetmacro{\rad}{1.5}
    \begin{tikzpicture}[decoration={markings,
  mark=between positions 0.3 and 0.8 step 6pt
  with { \draw [fill] (0,0) circle [radius=1pt];}}]
    
    \begin{scope}[xshift=-150pt]
	\filldraw[fill=blue!10] (0,0) circle (\rad);	
	
	\draw[radius=2pt,radius=2pt,fill=gray]
    (150:\rad) circle[] node[label={[label distance=-1pt] left:{\tiny$i$}}] (ui) {}
    (60:\rad) circle[] node[label={[label distance=-5pt] above right:{\tiny$j$}}] (uj) {}
    
    (135:\rad) circle[] node[label={[label distance=-6pt, text=green!30!black] above left:{\tiny$i+1$}}] (ui1) {}
    (120:\rad) circle[] node[label={[label distance=-2pt, text=green!30!black] above:{\tiny${i+2}$}}] (ui2) {}
    (80:\rad) circle[] node[label={[label distance=-3pt, text=green!30!black] above  :{\tiny$i+\lfloor\frac{\ell}2\rfloor$-1}}] (uiK) {}
    
    (0:\rad) circle[] node[label={[label distance=-1.5pt, text=blue!30!black] right:{\tiny$j+\lceil \frac\ell2\rceil -1$}}] (ujK) {}
    (-45:\rad) circle[] node[label={[label distance=-2pt, text=blue!30!black]  right:{\tiny${j+\ell-4}$}}] (uj2) {}
    (-60:\rad) circle[] node[label={[label distance=-7.5pt, text=blue!30!black] below right :{\tiny$j+\ell-3$}}] (uj1) {};
    
    \path[purple,thick] (ui) edge [bend right=10] (uj);
	\path[blue] (ui1) edge [bend right=10] (uj1);
	\path[blue] (ui2) edge [bend right=10] (uj2);
	\path[blue] (uiK) edge [bend right=10] (ujK);
	
	\path [gray,postaction={decorate}] (ui2) arc (120:80:\rad);
    \path [gray,postaction={decorate}] (uj2) arc (-45:0:\rad);
    
	\end{scope}

	
	\filldraw[fill=blue!10] (0,0) circle (\rad);	
	
	\draw[radius=2pt,fill=none]
    (150:\rad) circle[] node[label={[label distance=-5pt] above left:{\tiny$i$}}] (vi) {}
    (60:\rad) circle[] node[label={[label distance=-5pt] above right:{\tiny$j$}}] (vj) {}
    (-30:\rad) circle[] node[label={[label distance=-5pt] below right:{\tiny${j+\ell-k-2}$}}] (vjk) {}
    (120:\rad) circle[] node[label={[label distance=-5pt] below right:{\tiny${i+k}$}}] (vik) {};
		
	\draw[red,line width=2.5pt,opacity=0.3] (vj) arc (60:-30:\rad);
	\draw[red,line width=2.5pt,opacity=0.3] (vi) arc (150:120:\rad);
	\path[red,line width=1.5pt,opacity=0.3,dashed, dash phase=3pt] (vjk) edge [bend left=30] (vik);
	\path[red,line width=1.5pt,opacity=0.3,dashed, dash phase=3pt] (vi) edge [bend left=75] (vj);
	
	\draw[blue,line width=2.5pt,opacity=0.3] (vj) arc (60:120:\rad);
	\draw[blue,line width=2.5pt,opacity=0.3] (vi) arc (150:330:\rad);
	\path[blue,line width=2pt,opacity=0.3,dashed] (vjk) edge [bend left=30] (vik);
	\path[blue,line width=2pt,opacity=0.3,dashed] (vi) edge [bend left=75] (vj);
	
	\node[circle,scale=0.4,black,fill=gray] at (vi) {};
	\node[circle,scale=0.4,black,fill=gray] at (vj) {};
	\node[circle,scale=0.4,black,fill=gray] at (vjk) {};
	\node[circle,scale=0.4,black,fill=gray] at (vik) {};
	
    \end{tikzpicture}
\vspace{-0.1in}
    \caption{The $\ell$-cycle and $(n-\ell+4)$-cycle formed by switching as per Observation~\ref{obs:switching}. On left: the edge subset set $F_{e,\ell}$ corresponding to $e=(v_i,v_j)\in E_\ell$; on right: the paths $P_1$ and $P_2$ used in the construction are in red, the paths $P_3$ and $P_4$ are in blue.}
    \label{fig:switching}
\end{figure}
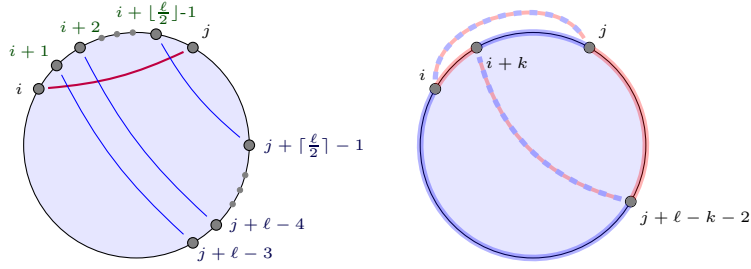}

We will expose the matching $\sfM$ in stages:
\begin{enumerate}[\!(I)]
	\item \label{it:stage-1} Expose a matching $M_1$ containing $\tmf_1:= \lfloor n/16\rfloor $ edges chosen uniformly at random out of $\sfM$.
	\item \label{it:stage-2} Expose a matching $M_2$ containing $\tmf_2 := \lfloor n/500\rfloor$ additional edges by repeatedly revealing the (random) match of a vertex with maximum degree in the subgraph induced by $\cX_{M_1}$ on the yet unmatched vertices.
	\item \label{it:stage-3} Reveal all other remaining edges of the perfect matching $\sfM$ (these will not be used by our argument).
\end{enumerate}

We will prove the following bounds for the auxiliary graph w.r.t.\ the matching $M_1$ at the end of Stage~\eqref{it:stage-1}.
\begin{lemma}\label{lem:aux-after-stage-I}
There exists $c>0$ such that, for every sufficiently large $n$, with probability at least $1-e^{-c n}$, 
the auxiliary graph $\cX_{M_1}$ has at least $ n\ell / 128$ edges, and its induced subgraph $\cY_0$ on the set $V\setminus V(M_1)$ of unmatched vertices has at least $n\ell / 200$ edges.
\end{lemma}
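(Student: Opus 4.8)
The plan is to derive both bounds from two inputs: that $M_1$ contains order-$n$ many edges of $E_\ell$, and a ``parallel class'' structure which makes the union $\bigcup_e F_{e,\ell}$ behave like a union of essentially disjoint intervals. First observe that, being a uniform $\tmf_1$-subset of the uniform perfect matching $\sfM$, the set $M_1$ is itself a uniform random partial matching of $V$ with $\tmf_1$ edges, so that $V(M_1)$ is a uniform random $2\tmf_1$-element subset of $V$. Expose the edges $f_1,\dots,f_{\tmf_1}$ of $M_1$ one at a time. Given $f_1,\dots,f_{i-1}$, the edge $f_i$ is uniform over the pairs of the $\ge n-2\tmf_1\ge\tfrac78 n$ still-unmatched vertices, of which at most $n\ell/2$ have circular distance below $\ell/2$; hence $\P(f_i\in E_\ell\mid f_1,\dots,f_{i-1})\ge\rho_\ell:=1-\tfrac{n\ell/2}{\binom{\lceil 7n/8\rceil}{2}}$, a quantity bounded below by an absolute positive constant for all $4\le\ell\le n/2$ and of the form $1-O(\ell/n)$. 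By stochastic domination by $\Bin(\tmf_1,\rho_\ell)$ and a Chernoff bound, $A:=\lvert M_1\cap E_\ell\rvert\ge(1-o(1))\rho_\ell\tmf_1$ with probability $1-e^{-cn}$.

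The structural input is that every edge of $F_{e,\ell}$, with $e=(v_i,v_j)$, has its two indices summing to $i+j+\ell-2\pmod n$ (immediate from \eqref{eq:def-F-e-ell}): writing $\tau(\{v_a,v_b\}):=a+b\bmod n$ and $\Pi_s:=\{\{v_a,v_b\}:a+b\equiv s\}$ (a perfect matching of $V$ for each $s$), this says $F_{e,\ell}\subseteq\Pi_{\tau(e)+\ell-2}$. A short verification using $\ell\le n/2$ and $j-i\ge\ell/2$ shows $F_{e,\ell}$ has exactly $L:=\lfloor\ell/2\rfloor-1$ distinct elements, and since $\Pi_s\cap\Pi_{s'}=\emptyset$ for $s\ne s'$, it follows that $F_{e,\ell}\cap F_{e',\ell}=\emptyset$ whenever $\tau(e)\ne\tau(e')$. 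Hence
\[
\lvert E(\cX_{M_1})\rvert=\sum_{s}\Bigl\lvert\bigcup_{e\in M_1\cap E_\ell,\ \tau(e)=s}F_{e,\ell}\Bigr\rvert\ \ge\ L\cdot T,\qquad T:=\bigl\lvert\{\tau(e):e\in M_1\cap E_\ell\}\bigr\rvert,
\]
so it remains to see $T$ is close to $A$. Put $\tilde C:=\#\{i\le\tmf_1:\tau(f_i)\in\{\tau(f_1),\dots,\tau(f_{i-1})\}\}$, so $T\ge A-\tilde C$; since $\lvert\Pi_s\rvert\le n/2$ and at most $i-1$ types have occurred before step $i$, $\P(\tau(f_i)\text{ already seen}\mid f_{<i})\le\tfrac{(i-1)(n/2)}{\binom{\lceil 7n/8\rceil}{2}}\le\tfrac{2(i-1)}{n}$, whence $\E\tilde C<\tmf_1^2/n\le n/256$ and a Chernoff bound for this sum of $[0,1]$-valued increments gives $\tilde C\le\eta n$ for a small absolute $\eta>0$ with probability $1-e^{-cn}$. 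Combining, $\lvert E(\cX_{M_1})\rvert\ge L\bigl((1-o(1))\rho_\ell\tmf_1-\eta n\bigr)$, which is $\ge n\ell/128$ by a tight but direct computation with $\tmf_1=\lfloor n/16\rfloor$, the binding case being $\ell=\Theta(n)$, where $L\sim\ell/2$ and $\rho_\ell$ is a fixed constant in $(1/3,1)$.

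For $\cY_0$, the subgraph induced on $V\setminus V(M_1)$, bound the deficiency $\lvert E(\cX_{M_1})\rvert-\lvert E(\cY_0)\rvert$ — the number of edges of $\cX_{M_1}$ that meet $V(M_1)$ — by
\[
\Phi:=\sum_{e=(v_i,v_j)\in M_1\cap E_\ell}\bigl(\lvert V(M_1)\cap A_i\rvert+\lvert V(M_1)\cap A'_j\rvert\bigr),
\]
where $A_i:=\{v_{i+1},\dots,v_{i+L}\}$ and $A'_j:=\{v_{j+\lceil\ell/2\rceil-1},\dots,v_{j+\ell-3}\}$ are the two length-$L$ ``active arcs'' of $e$, each (one checks) disjoint from $\{v_i,v_j\}$. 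Conditionally on a given edge lying in $M_1$, the remaining vertices of $V(M_1)$ form a uniform $(2\tmf_1-2)$-subset of the complementary vertex set, so the expectation of each $A_i$-summand is at most $L\cdot\tfrac{2\tmf_1}{n}$ and $\E\Phi\le\tfrac14 AL(1+o(1))$; moreover, conditioning on $V(M_1)=S$ makes $\Phi$ a function of the uniform perfect matching of $S$ with Lipschitz constant $O(L)$ — a transposition alters $O(1)$ of the $O(\tmf_1)$ summands, each by at most $L$ — so the bounded-differences inequality for random matchings upgrades this to $\Phi\le\tfrac14 AL(1+o(1))$ with probability $1-e^{-cn}$. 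Then $\lvert E(\cY_0)\rvert\ge LT-\Phi\ge L(\tfrac34 A-\eta n)(1+o(1))$, which is $\ge n\ell/200$ by the same kind of tight computation: comfortable for bounded $\ell$ (where $\rho_\ell\to1$) and again binding at $\ell=\Theta(n)$.

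I expect the main obstacle to be the high-probability control of $\Phi$. The naive route — delete every edge of $\cX_{M_1}$ touching $V(M_1)$, bounding their number via Observation~\ref{obs:max-deg} by $(\ell-3)\lvert V(M_1)\rvert$ — loses a factor of order $\ell$ and fails outright; one must count through the active arcs, but then $\Phi$ is a quadratic statistic of the random matching whose worst-case Lipschitz constant is of order $n$ (one unusually crowded type, or one arc heavy with matched vertices, moves it a lot), so bounded differences cannot be applied to $M_1$ directly — hence the conditioning on $V(M_1)$, which freezes the heavy randomness and leaves a matching-valued variable with Lipschitz constant only $O(L)$. A secondary nuisance is that the constants $\tfrac1{16},\tfrac1{128},\tfrac1{200}$ are coordinated tightly, so the $\ell/n$-dependence of $\rho_\ell$ must be carried through the final inequalities rather than replaced by a worst-case value. (The analogous concentration for $\tilde C$ is easier, since $\tilde C$ is a sum of $[0,1]$ increments and succumbs to a plain Chernoff bound.)
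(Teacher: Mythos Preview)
Your parallel-class observation---that every edge of $F_{e,\ell}$ has index sum $\equiv\tau(e)+\ell-2\pmod n$, so edges with distinct types have pairwise disjoint $F$-sets---is correct and clean (it in fact subsumes the paper's overlap bound~\eqref{eq:F-e-ell-F-f-ell-intersections}). The lower bound $|E(\cX_{M_1})|\ge LT$ then goes through, but be warned that the numerics at $\ell=n/2$ are razor-thin: with $\rho_\ell=17/49+o(1)$ and your bound $\E\tilde C\le n/256$, one must choose the Chernoff deviations for $A$ and $\tilde C$ very carefully to leave $T\ge n/64$, and the resulting constant $c$ in $e^{-cn}$ is of order $10^{-6}$ or smaller. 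The ``$(1+o(1))$'' factors you write are misleading here---they must be $(1\pm\epsilon)$ for small \emph{fixed} $\epsilon$, since the required tail probability is $e^{-cn}$.

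The genuine gap is in the concentration of $\Phi$. Conditioning on $V(M_1)=S$ and applying bounded differences for the uniform matching of $S$ yields concentration of $\Phi$ around $\E[\Phi\mid S]$, not around $\E\Phi$ or around $\tfrac14 AL$. You never control
\[
\E[\Phi\mid S]\;=\;\frac{1}{2\tmf_1-1}\sum_{e\in\binom{S}{2}\cap E_\ell}\bigl(|S\cap A_i(e)|+|S\cap A'_j(e)|\bigr),
\]
which is a degree-$3$ polynomial in the indicator of $S$; showing it is $\le\tfrac14\,\E[A]\,L\,(1+o(1))$ for typical $S$ with probability $1-e^{-cn}$ is a separate concentration step you omit. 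Worse, the target ``$\tfrac14 AL$'' is itself random given $S$ (since $A=|M_1\cap E_\ell|$ depends on the matching structure), so it is not even the right benchmark for the conditional bounded-differences inequality. Your own diagnosis---that the unconditioned Lipschitz constant of $\Phi$ is of order $n$, forcing the conditioning trick---is correct, but the trick only shifts the difficulty to the uncontrolled $\E[\Phi\mid S]$.

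The paper sidesteps this entirely. It exposes $M_1$ one edge at a time and sets $D_t$ to be the number of edges of $\cX_{M_1^{(t-1)}}$ incident to the new edge $e_t$; Observation~\ref{obs:max-deg} gives $0\le D_t\le 2\ell$ \emph{deterministically}, and since $|E(\cX_{M_1^{(t-1)}})|\le(t-1)\ell/2$ one has $\E[D_t\mid\cF_{t-1}]\le \ell(t-1)/(n-2t)$. Hoeffding applied to the supermartingale $\sum_k(D_k-\tfrac{k-1}{n-2k}\ell)$ then gives $D\le n\ell/400$ with probability $1-e^{-\Theta(n)}$ directly, with no splitting of the randomness into ``vertex set'' and ``matching'' and with comfortable constants throughout. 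The same edge-by-edge exposure also drives the paper's first bound (via the sets $S_t$ of edges whose $F$-sets are disjoint from all previous ones), which is coarser than your type argument but avoids the tight margins.
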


Modulo the above lemma, one can easily show that $M_1 \cup M_2 \cup \sfC_n$ already contains cycles of lengths $\ell$ and $n-\ell+4$ with probability $1-O(e^{-c \ell})$, implying the sought inequality~\eqref{eq:Ham-PM-single-ell-bound}. 
To see this, condition on $M_1$ and suppose that $|E(\cY_0)| \geq n\ell / 200$ as per the conclusion of the lemma. Denote by $f_1,\ldots,f_{\tmf_2}$ the edges exposed in Stage~\eqref{it:stage-2}, and let $\cY_{t}$ ($t=0,\ldots,\tmf_2-1)$ denote the induced subgraph of $\cX_{M_1}$ on the vertices yet unmatched after revealing $f_1\,\ldots,f_t$. Recall that $f_{t+1}$ will match some vertex $u$ with a maximum degree in~$\cY_{t}$. By Observation~\ref{obs:max-deg}, deleting $k$ (matched) vertices from~$\cY_0$ results in the removal of at most $k\ell$ edges; thus, for large enough $n$,
 \[|E(\cY_t)| \geq |E(\cY_0)|- 2 t\ell  \geq\frac{n\ell}{200} - 2 \tmf_2\ell \geq \frac{n\ell}{1000}\,,\]
and so $\deg(u) \geq \ell/500$ in $\cY_t$. As the match of $u$ is uniformly distributed over the other vertices of $\cY_t$, 
\[ \P\bigg( \bigcap_{t=1}^{\tmf_2} \left\{f_t \notin E(\cX_{M_1})\right\} \bigg) \leq \P\bigg( \bigcap_{t=0}^{\tmf_2-1} \left\{f_{t+1} \notin E(\cY_t)\right\} \bigg) \leq \Big(1-\frac{\ell}{500n}\Big)^{\tmf_2} \leq e^{-c \ell}\]
for an absolute constant $c>0$. The event $f_t\in E(\cX_{M_1})$ implies that $f_t\in F(e,\ell)$ for some $e\in M_1 \cap E_\ell$, in which case
Observation~\ref{obs:switching} yields the sought cycles.
 It thus remains the prove the above lemma.
 
 \begin{proof}[\textbf{\emph{Proof of Lemma~\ref{lem:aux-after-stage-I}}}]
Consider time $t=0,\ldots,\tmf_1-1$, and let $M_{1}^{(t)} = \{e_1,\ldots,e_{t}\}$ denote the first $t$ edges exposed in~$M_1$, and let $\cF_t=\sigma(M_1^{(t)})$ be the corresponding filtration. Further let
\[
S_t = \bigg\{ e\in E_{\ell} \setminus M_{1}^{(t)} \,:\; F(e,\ell )\cap \Big(\bigcup_{j\leq t}F(e_j,\ell )\Big) =\emptyset \bigg\} \,,
\]
noting that whenever $e_{t+1}\in S_t$, this edge will contribute the entire edge set $F(e_{t+1},\ell)$ as new edges to $\cX_{M_1}$.
  
Let $e=(v_i,v_j)\in E_\ell$; the edges in $F_{e,\ell}$ are a matching of consecutive pairs from $L_e=(v_{i+k\pmod n})_{k=1}^{K}$ and $R_e=(v_{j+\ell-k-2 \pmod n})_{k=1}^{K}$, with $K=\lfloor\ell/2\rfloor-1$.
So, if $f=(v_{i'},v_{j'})$ is such that $F_{e,\ell}$ and $F_{f,\ell}$ intersect, it must be that for some $d\in\llb -K+1,K-1\rrb$, either
$i' \equiv i + d $ or  $j'+\ell-K-3 \equiv i+d$ (with $\equiv$ denoting equivalence modulo $n$). For any common edge obtained as the $k$-th matched pair in $F_{e,\ell}$ and the $k'$-th pair in $F_{f,\ell}$, in the former case we would have $i'+k'\equiv i+k$ and $j'+\ell-k'-2\equiv j+\ell-k-2$, so $j'\equiv j-d$. In the latter case we have $j'+\ell-k'-2\equiv i+k$ and $i'+k'\equiv j+\ell-k-2$, and therefore $i'+j' \equiv i+j$. 
Altogether, in each case the $2(K-1)$ choices for $d$ determine the edge $f$, and hence
\begin{equation}
    \label{eq:F-e-ell-F-f-ell-intersections} \#\left\{e\in E_\ell\setminus \{e_0\} \,:\; |F_{e,\ell}\cap F_{e_0,\ell}|\neq\emptyset \right\}
\leq 4(\lfloor \tfrac\ell2\rfloor-2) \leq 2\ell -8\qquad\mbox{for every $e_0\in E_\ell$}\,.	
\end{equation}
From this bound, we immediately deduce that for every $t=0,\ldots,\tmf_1-1$,
\[ |S_t| \geq |E_\ell\setminus M_1^{(t)}| - (2\ell-8)(t-1) \geq |E_\ell| - 2\ell t\geq n(n-\ell)/2-2\ell \tmf_1 \geq (\tfrac3{16} -o(1))n^2  > n^2/6
\]
for large enough $n$; thus,
$ \P(e_{t}\in S_{t-1}\mid \cF_{t-1})\geq |S_{t-1}|/\binom{n-2t}2 > \frac13$ for all $1\leq t \leq \tmf_1$, and so the variable
\[ N_t:=\#\{1\leq t \leq \tmf_1 \,:\; e_{t} \in S_{t-1} \}\]
 stochastically dominates a $\Bin(\tmf_1,\tfrac13)$ random variable. Therefore, for some absolute constant $c>0$, 
\[   \P(N_t \leq \tfrac3{10} \tmf_1 ) \leq \exp(-c\, \tmf_1)\,.\]
As every $e_{t+1}\in S_t$ adds all of its $\lfloor\ell/2-1\rfloor$ edges to $\cX_{M_1}$,  on the event $\{N_t \geq \frac3{10}\tmf_1\}$ we have 
\[ |E(X_{M_1})|\geq (\tfrac3{20}-o(1))\ell \tmf_1 > \tfrac{1}{128} n \ell\,,\]
as claimed. To bound $|E(\cY_0)|$, define
\[ D = \sum_{t=1}^{\tmf_1} D_t\qquad\mbox{where} \qquad D_t = \# \left\{ f \in E(\cX_{M_1^{(t-1)}}) \,:\;\mbox{$e_t$ and $f$ are incident}\right\}\,,\]
i.e., $D_t$ bounds the number of edges deleted from $\cY_0$ when moving from ${M_1}^{(t-1)}$ to ${M_1}^{(t)}$ due to the edge $e_t$.
Each edge in $M_1^{(t-1)}\cap E_\ell$ adds at most $\ell/2$ edges to $\cX_{M_1^{(t)}}$, so $|E(\cX_{M_1^{(t-1)}})|\leq (t-1)\ell/2$ holds deterministically. 
The probability that a fixed edge $f\in E(\cX_{M_1^{(t-1)}})$ is incident to $e_t$ is at most $2/(n-2t)$, so
\[ \E[D_t \mid \cF_{t-1}] \leq  \frac{\ell(t-1)}{n-2t} \quad\mbox{for every $1\leq t \leq \tmf_1$}\,,\]
and in particular, $Z_t = \sum_{k=1}^t (D_k - \frac{k-1}{n-2k}\ell)$ is a supermartingale with
\[ D - Z_{\tmf_1} \leq \ell \sum_{t=1}^{\tmf_1} \frac{t-1}{n-2t} \leq \frac{\tmf_1^2}2 \frac{\ell}{n-2\tmf_1} = \frac{1+o(1)}{448}n\ell\,.\]
Recalling that $0 \leq D_t \leq 2\ell$ for every $t$ by Observation~\ref{obs:max-deg} about the maximum degree of $\cX_{M_1^{(t)}}$, whereas $-\ell \frac{t-1}{n-2t}\in [-\frac{\ell}{14},0]$, we have that $|Z_t-Z_{t-1}| \leq 2\ell$, hence Hoeffding's inequality implies that, for $\delta=10^{-4}$,
\[ \P(D > n\ell/400) \leq  \P(Z_{\tmf_1} \geq \delta n\ell)\leq \exp\bigg(-\frac{(\delta n\ell)^2}{2 (2\ell)^2 \tmf_1}\bigg) = \exp(-2\delta^2 n)\]
Overall we obtained that, for some absolute constant $c>0$,  with probability $1-O(e^{-c n})$ we have (by a union bound) both $|E(\cX_{M_1})|> n\ell/128$ and  $D < n\ell/400$, implying that $|E(\cY_0)|> n\ell/200$, as required.
\end{proof}

This completes the proof of Theorem~\ref{thm:Ham-plus-PM} for the cubic case of $\cH(n)+\cG(n,1)$. When $n$ is odd, we appeal to the same argument by treating the second independent and uniform Hamilton cycle as a matching (ordering this cycle, whenever the argument asks for the random match of a vertex $u$ we reveal its successor $v$ on the cycle, then discard $u,v$ from the pool of unmatched vertices). Note that the proof did not need the matching to be perfect, and only utilized $\lfloor n/16\rfloor$ of its edges in Step~\eqref{it:stage-1} and $\lfloor n/500\rfloor$ of its edges in Step~\eqref{it:stage-2}.
\qed

\subsection{Proof of Theorem~\ref{thm:G(n,d)-near-n}}\label{sec:G(n,d)-near-n}

The case of $\cG(n,d)$ for $d\geq 3$ reduces to the case of $\cG(n,3)$ by the monotonicity of $\cG(n,d)$ in $d$ w.r.t.\ increasing properties that hold asymptotically almost surely. Moreover, since we aim to prove that $\llb n-\omega_n,n\rrb$ for a sequence $\omega_n$ tending to $\infty$ however slowly with $n$, it suffices to show that \begin{equation}\label{eq:G(n,d)-n-k}
\P(n-k\in\sL(G))\to 1\qquad\mbox{for $G\sim\cG(n,3)$ and every fixed $k\geq 1$}\end{equation}
(where the case $k=0$---Hamiltonicity---owes of course to the famous result by Robinson and Wormald~\cite{RW92}).
The following theorem immediately implies~\eqref{eq:G(n,d)-n-k}, and will consequently establish Theorem~\ref{thm:G(n,d)-near-n}.
\begin{theorem} Let $k=k(n)$ be such that $1 \leq k = o(\sqrt n)$. Then $G\sim \cG(n,3)$ has $n-k\in\sL(G)$ w.h.p.
\end{theorem}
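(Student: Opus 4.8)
The plan is to reduce the statement $n-k\in\sL(G)$ for $G\sim\cG(n,3)$ to a result of Robinson and Wormald~\cite{RW01} on the existence of Hamilton cycles in random regular graphs that avoid a prescribed small set of edges while containing another such set. Concretely, I would first pin down an appropriate small structure in $G$ whose deletion leaves a graph on $n-k$ vertices that we wish to be Hamiltonian, and then argue this holds w.h.p.

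First I would condition on the local structure near a fixed short cycle or short path. Since $G\sim\cG(n,3)$ has, w.h.p., many vertex-disjoint short cycles (indeed the number of $\ell$-cycles for each fixed $\ell$ converges to a Poisson with mean $(d-1)^\ell/(2\ell)=2^\ell/(2\ell)$), one natural approach is: to find an $(n-k)$-cycle, locate a set $S$ of $k$ vertices that can be ``bypassed.'' The cleanest mechanism is to find a path $Q$ on $k+1$ vertices, say $w_0,w_1,\dots,w_k$, all of degree appropriate, with $w_1,\dots,w_{k-1}$ internal of degree exactly contributing only to $Q$, so that removing the internal $k-1$ vertices, or rather the $k$ vertices $w_1,\dots,w_k$, and adding the single ``shortcut'' edge $\{w_0, w_k'\}$ where $w_k'$ is the neighbor of $w_k$ off the path, yields a 3-regular-like graph on $n-k$ vertices that is Hamiltonian. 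More robustly, one can work directly with the Robinson--Wormald framework: reveal a small portion of $G$ exposing a suitable sub-configuration $H$ on $k+O(1)$ vertices together with a designated pair of ``endpoints'' $u,v$ in the rest of the graph, such that an $(n-k)$-cycle in $G$ exists if and only if $G - H$ (a graph on roughly $n-k$ vertices, nearly 3-regular, with a bounded number of lower-degree ``stubs'' at $u,v$) contains a Hamilton path between $u$ and $v$, equivalently a Hamilton cycle in $(G-H)+\{uv\}$.

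The key steps, in order, would be: (i) show that w.h.p.\ $G$ contains a ``gadget'' — for instance a path of length $k+1$ attached to the rest of the graph at exactly two vertices, or $k$ suitably placed degree-2-in-the-remainder vertices forming a bypassable segment — using the Poisson convergence of short-subgraph counts and a first/second-moment computation valid in the range $k=o(\sqrt n)$ (the $\sqrt n$ threshold is exactly what keeps the expected number of such structures growing, since a path on $k$ vertices has probability roughly $n^{-k}\cdot(\text{choices})\approx$ constant$^k/$polynomial once $k\ll\sqrt n$; more precisely this is where the second-moment / variance bound does not blow up); (ii) condition on one such gadget $H$ and argue that, conditionally, the remaining graph is distributed as (something contiguous to) a random near-regular graph on $n-|V(H)|$ vertices with two marked vertices of reduced degree; (iii) invoke the Robinson--Wormald theorem~\cite{RW01} — which, via small subgraph conditioning, gives Hamiltonicity of random (near-)regular graphs subject to including/excluding $o(\sqrt n)$ edges — to conclude that $(G-H)+\{uv\}$ is Hamiltonian w.h.p.; (iv) lift the resulting $(n-k)$-cycle back to $G$, verifying it does not use $V(H)\setminus\{u,v\}$, hence witnesses $n-k\in\sL(G)$.

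The main obstacle I expect is step (ii)–(iii): one must set up the conditioning so that the hypotheses of the Robinson--Wormald result apply verbatim — in particular that after exposing the gadget the rest of $G$ is still in a model covered by their theorem (a random regular graph with a forced/forbidden edge set of size $o(\sqrt n)$), and that the two reduced-degree vertices $u,v$ can be handled (e.g.\ by adding a temporary edge or a temporary pendant structure and accounting for it). Controlling this cleanly, while keeping $k=o(\sqrt n)$ so that the forbidden/forced edge set stays within the tolerated range and the first-moment count of gadgets tends to infinity, is the crux; the short-cycle Poisson input and the reduction itself are comparatively routine. Once the reduction is in place, taking $\omega_n\to\infty$ slowly and combining with Theorem~\ref{thm:Ham-plus-PM} as in the proof of Theorem~\ref{thm:G(n,d)} finishes the argument.
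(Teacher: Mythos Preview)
Your high-level strategy --- reduce to the Robinson--Wormald theorem~\cite{RW01} on Hamilton cycles in $\cG(n,3)$ that avoid/include a small random edge set --- matches the paper's. The gap is in the reduction mechanism. The gadget you describe (a $k$-vertex set whose deletion leaves only two vertices of reduced degree, so that a single shortcut edge restores regularity) cannot exist in a cubic graph: removing a $k$-set with $m$ internal edges leaves $3k-2m$ dangling half-edges, and forcing this to equal $2$ requires $m=(3k-2)/2$, which for $k\geq 3$ exceeds $k$ (and is infeasible in a simple graph for $k\leq 2$). Hence the expected count of such subgraphs in $\cG(n,3)$ is $O(n^{k-m})=o(1)$, and step~(i) fails outright --- no first/second-moment computation rescues it. Even if you allowed more stubs (say a path, giving $k+2$ stubs), the remainder $G-H$ is no longer $3$-regular, and conditioning on a gadget at a fixed location does not leave the rest distributed as a random regular graph with \emph{random} forced/forbidden edges, which is exactly what \cite{RW01} demands; the parenthetical ``(near-)regular'' in your step~(iii) is not covered by that theorem.

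The paper sidesteps both obstacles by a random surgery rather than a structure search. One selects $\ell=\lceil k/2\rceil$ uniformly random half-edges, deletes the $2\ell$ endpoints $\{u_i,u_i'\}_{i\leq\ell}$ of the matched pairs, and for each deleted vertex reconnects its two surviving neighbors to one another. The resulting graph $H$ is again exactly $3$-regular, is distributed as $\cG(n-2\ell,3)$ on an event of probability $1-O(k^2/n)=1-o(1)$, and carries a set $S_H$ of $2\ell$ newly created edges that are themselves uniformly random in $H$ --- precisely the hypothesis of~\cite{RW01}. For even $k=2\ell$, a Hamilton cycle in $H$ avoiding $S_H$ uses only original edges of $G$, hence is an $(n-k)$-cycle there. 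For odd $k=2\ell-1$, one asks the Hamilton cycle in $H$ to \emph{include} the designated edge $x_1y_1\in S_H$ and avoid the rest; in $G$ this is a Hamilton path on $n-2\ell$ vertices between the two neighbors of $u_1$, and reinserting $u_1$ closes it into an $(n-2\ell+1)$-cycle. The idea you were missing is that deleting random \emph{edges} (and their endpoints) rather than hunting for a gadget lets you reconnect stubs in pairs, simultaneously preserving $3$-regularity, preserving the random-regular law, and manufacturing the random forbidden edge set that Robinson--Wormald require.
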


\begin{proof}
First consider the case where $k=2\ell$ for some $\ell\geq 1$. Let $S_G$ be a uniformly chosen set of ordered edges in $G$, chosen in the following manner. If $V(G)=\{v_1,\ldots,v_n\}$ and the $3$ half-edges of $v_i$ are denoted $(e_{i,j})_{j=1}^3$, we let $S$ be a uniform $\ell$-subset of all $e_{i,j}$'s. Clearly, these half-edges and their matches are together associated with~$2\ell$ distinct vertices except with probability $1-O(k^2/n) = 1-o(1)$ by our assumption on~$k$. Denote the vertices corresponding to the $i$-th pair of half-edges by $(u_i,u'_i)$. Further let $x_i,y_i$ and $x'_i,y'_i$ denote the other two half-edges matched in $G$ to $u_i$ and $u'_i$, respectively, once again pointing out that these half-edges are w.h.p.\ not part of $\bigcup_{i=1}^\ell \{u_i,u'_i\}$ by our assumption on $k$.

Next, define $H$ to be the graph obtained from $G$ by deleting $u_1,u'_1,\ldots,u_\ell,u'_\ell$ and thereafter connecting the half-edges $x_i y_i$ and $x'_i y'_i$ for each $i=1,\ldots,\ell$ (we add every such edge whenever the corresponding two half edges were not deleted as part of some $u_i$ or $u'_i$), denoting these newly added edges by $S_H$ (see Fig.~\ref{fig:coupling-G(n,d)-G(n-2l,d)}).

Observe that, on the event $E_1$ that the edges matched to $S_G$ in $G$ form a matching (occurring w.h.p.), the distribution of $G$ condition on these edges is uniform over perfect matchings of the remaining $3n-2\ell$ half-edges. Thereafter, on the event $E_2$ that the half-edges $\{x_i,y_i,x'_i,y'_i\}_{i=1}^\ell$ do not belong to any of the vertices $\{u_i,u'_i\}_{i=1}^\ell$ (occurring w.h.p.), these $4\ell$ half-edges are uniformly distributed over all $3(n-2\ell)$ half-edges. In conclusion, on the event $E_1\cap E_2$, which occurs w.h.p., we have that $H\sim\cG(n-2\ell,3)$, and furthermore the set $S_H$ is a uniform set of $2\ell$ ordered edges in $H$ (analogous to the above set $S_G$ in $G$).
We now appeal to a result of Robinson and Wormald~\cite[Thm.\ 3(i)]{RW01}, stating that w.h.p.\ there exists a Hamilton cycle in $H$ which \emph{avoids} a set of $2\ell=o(\sqrt n)$ randomly chosen edges $S_H$. The same cycle belongs to $G$, hence $n-2\ell\in\sL(G)$.

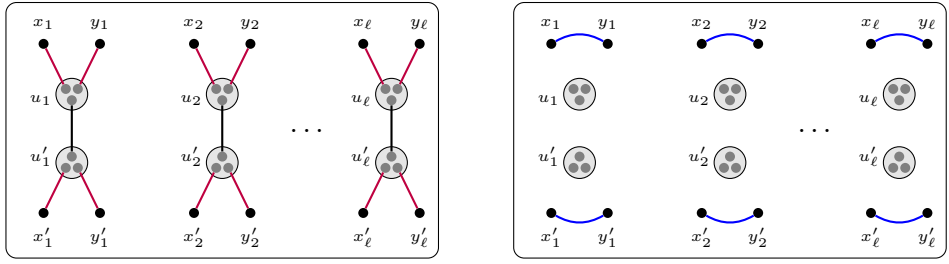
\begin{figure}
    \centering
    \begin{tikzpicture}
    \draw[rounded corners] (-.5, -2.1) rectangle (5.25, 1.3) {};
    \draw[rounded corners] (6.25, -2.1) rectangle (12, 1.3) {};
    
    \foreach \i / \b / \gr / \lbl in {1/0/g/1, 2/2/g/2, l/4.25/g/\ell, 1/6.75/h/1,2/8.75/h/2,l/11/h/\ell} {
    \filldraw[fill=gray,fill opacity=0.2]  (0.3755+\b,0.08) circle (6pt); 
    \node[circle,scale=0.4,fill=gray,label={[label distance=2pt]left:{\tiny$u_\lbl$}}] (u\gr\i0) at (0.3755+\b,0) {};
    \node[circle,scale=0.4,fill=gray] (u\gr\i1) at (0.3755+\b-0.08,0.15) {};
    \node[circle,scale=0.4,fill=gray] (u\gr\i2) at (0.3755+\b+0.08,0.15) {};
    \node[circle,scale=0.4,fill=black,label={\tiny$x_\lbl$}] (x\gr\i) at (\b,0.75) {};
	\node[circle,scale=0.4,fill=black,label={\tiny$y_\lbl$}] (y\gr\i) at (0.75+\b,0.75) {};
	
    \filldraw[fill=gray,fill opacity=0.2]  (0.3755+\b,-0.83) circle (6pt); 
    \node[circle,scale=0.4,fill=gray,label={[label distance=2pt]left:{\tiny$u'_\lbl$}}] (u\gr'\i0) at (0.3755+\b,-.75) {};
    \node[circle,scale=0.4,fill=gray] (u\gr'\i1) at (0.3755+\b-0.08,-0.9) {};
    \node[circle,scale=0.4,fill=gray] (u\gr'\i2) at (0.3755+\b+0.08,-0.9) {};
    \node[circle,scale=0.4,fill=black,label=below:{\tiny$x'_\lbl$}] (x\gr'\i) at (\b,-1.5) {};
	\node[circle,scale=0.4,fill=black,label=below:{\tiny$y'_\lbl$}] (y\gr'\i) at (0.75+\b,-1.5) {};
	}
	\node[font=\large] at (10.25,-0.4) {$\ldots$};
	\node[font=\large] at (3.5,-0.4) {$\ldots$};
	
	\foreach \i in {1,2,l} {
	\path[thick,black] (ug\i0) edge  (ug'\i0);
    \path[thick,purple] (ug\i1) edge  (xg\i);
	\path[thick,purple] (ug\i2) edge  (yg\i);
	\path[thick,purple] (ug'\i1) edge  (xg'\i);
	\path[thick,purple] (ug'\i2) edge  (yg'\i);
    \path[thick,blue] (xh\i) edge [bend left=30] (yh\i);
    \path[thick,blue] (xh'\i) edge [bend left=-30] (yh'\i);
    }
    \end{tikzpicture}
    \caption{Coupling $G\sim\cG(n,3)$ (on left) to $H\sim\cG(n-2l,3)$ (on right).}
    \label{fig:coupling-G(n,d)-G(n-2l,d)}
\end{figure}

For the case $k=2\ell-1$, we apply the same coupling and appeal to the same theorem of~\cite{RW01}, showing that w.h.p.\ there exists a Hamilton cycle in $H$ that \emph{includes} the edge $x_1 y_1$ and yet avoids the edges $S_H \setminus \{x_1 y_1\}$. This corresponds to a path on $n-2\ell$ vertices in $G$, beginning in the vertex associated to $x_1$, ending in the vertex associated with $y_1$, and avoiding $u_1$. Adding to this path the edges in $E(G)\setminus E(H)$ from $u_1$ to $x_1$ and from $u_1$ to $y_1$ closes it into a cycle of length $n-2\ell+1=n-k$; hence, again $n-k\in\sL(G)$, as required.
\end{proof}

\section{Binomial random graphs} \label{sec:G(n,p)}

In this section we derive Theorem~\ref{thm:G(n,p)}, as well as a result addressing the regime $p=\frac{1+\epsilon}n$ for small $\epsilon>0$ (Theorem~\ref{thm:G(n,p)-D(n,p)-slightly-supercritical}), as immediate consequences of results addressing $\sL(G)$ for $G$ the random graph/digraph obtained as a union of a binomial random graph and a Hamilton cycle.

Denote by $\cH(n)\oplus\cG(n,p)$ the random simple graph on the vertices $\{v_0,\ldots,v_{n-1}\}$, whose edges are  the union of the cycle $\sfC_n = \left( v_0 ,v_1, \ldots,v_{n-1},v_0 \right)$ and a random subset of all other undirected edges, each one  present independently with probability $p$. Its directed analog, denoted $\vec\cH\oplus \cD(n,p)$, has the same vertices, and its edges are the union of $\vec\sfC_n$---the directed cycle whose edges are $\{ (v_i,v_{i+1\tpmod n}):i\in\llb0,n-1\rrb\}$---and a random subset of the other (directed) edges, each one present according to an independent Bernoulli($p$) random variable.
Following are the analogs of Theorem~\ref{thm:Ham-plus-PM} for  $\cH(n)\oplus\cG(n,p)$ and $\vec\cH(n)\oplus\cD(n,p)$.

\begin{theorem}\label{thm:Ham-plus-Gnp}
Fix $\delta>0$, and let $G\sim \cH(n)\oplus\cG(n,p)$ for $p=\delta/n$. There exist absolute constants $C,c>0$ such that, for any $4\leq \ell \leq n/2$, we have $\llb \ell, n-\ell+4\rrb\subset\sL(G)$ with probability at least $1-C \exp(-c(\delta^2 \wedge 1) \ell)$.
\end{theorem}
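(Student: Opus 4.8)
The plan is to follow the proof of Theorem~\ref{thm:Ham-plus-PM} almost verbatim, replacing the random perfect matching by the (non-cycle) edge set of $\cG(n,p)$. The combinatorial skeleton carries over unchanged: the families $E_\ell$ and $F_{e,\ell}$, the switching Observation~\ref{obs:switching} (a statement about $\sfC_n$ alone), and the auxiliary graph $\cX_{E'}=\bigcup\{F_{e,\ell}:e\in E'\cap E_\ell\}$ attached to a set $E'$ of chords. As there, it suffices to bound $\P(\{\ell',n-\ell'+4\}\not\subset\sL(G))$ for a single $4\le\ell'\le\lfloor n/2\rfloor+2$ and then take a union bound over $\ell'\in\llb\ell,\lfloor n/2\rfloor+2\rrb$: the lengths $\ell'$ together with their switching partners $n-\ell'+4$ exhaust $\llb\ell,n-\ell+4\rrb$, and the resulting geometric series is dominated by its first term.

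First I would dispose of large $\delta$: for $\delta\ge\delta_0$ (a small absolute constant) couple $\cG(n,\delta_0/n)\subseteq\cG(n,\delta/n)$ and use monotonicity of the increasing event to reduce to $\delta=\delta_0$, where $\delta_0^2\wedge1\asymp1$. So one may assume $\delta\le\delta_0$, in particular $\ell'p=\ell'\delta/n\le\delta_0/2$ for every $\ell'\le n/2$; this smallness drives the argument.

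The heart is the analog of Lemma~\ref{lem:aux-after-stage-I}. Write $\cG(n,p)=G_1\cup G_2$ with $G_1,G_2$ independent copies of $\cG(n,q)$, $q\asymp p$. Stage~I: expose $G_1$ and scan the edges of $E_{\ell'}$ in uniformly random order, greedily placing a scanned edge $e$ into a family $\mathcal E'$ whenever $e\in E(G_1)$ and $F_{e,\ell'}$ is disjoint from $\bigcup_{e''\in\mathcal E'}F_{e'',\ell'}$. By \eqref{eq:F-e-ell-F-f-ell-intersections}, accepting $A$ chords blocks at most $(2\ell'-8)A$ edges of $E_{\ell'}$; since $A$ stays $O(\delta n)$ throughout (the total number of present edges in $E_{\ell'}$ is $\Bin(|E_{\ell'}|,q)\le 2\delta n$ w.h.p.), the probability that a freshly scanned edge is blocked is $O(\ell'q)=O(\delta_0)<\tfrac12$, so conditionally each scanned edge joins $\mathcal E'$ with probability $\ge q/2$ over the first $|E_{\ell'}|/2$ steps. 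A Chernoff bound (with a stopping time to handle the conditioning) then gives $|\mathcal E'|\ge c_1\delta n$ except with probability $e^{-c_2\delta n}$, which is negligible beside $e^{-c(\delta^2\wedge1)\ell'}$ since $\delta n\ge\delta^2\ell'$; hence $|E(\cX_{\mathcal E'})|=|\mathcal E'|(\lfloor\ell'/2\rfloor-1)\ge c_3\,\delta n\ell'$. Stage~II: expose $G_2$, independent of $G_1$; since $E(\cX_{\mathcal E'})$ consists of non-cycle edges, the chance $G_2$ misses all of them is $(1-q)^{|E(\cX_{\mathcal E'})|}\le\exp(-c_3q\,\delta n\ell')=\exp(-c\,\delta^2\ell')$. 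On the complement, some $e\in\mathcal E'\cap E_{\ell'}$ and some $f\in F_{e,\ell'}$ both lie in $G$, and Observation~\ref{obs:switching} produces cycles of lengths $\ell'$ and $n-\ell'+4$, which is the desired single-length bound.

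The hard part will be the Stage~I estimate $|E(\cX_{\mathcal E'})|\gtrsim\delta n\ell'$ (with exponentially small failure probability), and in particular that the factor $\ell'$ is retained even for $\ell'$ linear in $n$. The point is that $\cG(n,\delta/n)$ carries only $\asymp\delta n$ edges, so although each present chord creates $\Theta(\ell')$ conflicts in $E_{\ell'}$, they are ``diluted'' by the factor $p$ and do not saturate $E_{\ell'}$ (this is exactly where $\ell'p\ll1$ enters); a cruder argument that merely extracts a pairwise-$F$-disjoint subfamily via the worst-case greedy bound $|\mathcal E'|\gtrsim(\delta n)/\ell'$ would drop the $\ell'$ and yield only $\exp(-c(\delta^2\wedge1))$. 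An alternative route, bypassing the staged exposure, is to apply Janson's inequality to the number of pairs $(e,f)$ with $e\in E_{\ell'}\cap E(G)$ and $f\in F_{e,\ell'}\cap E(G)$: one checks its mean is $\asymp\delta^2\ell'$ while its dependency sum is $O(\text{mean})$, using that each edge lies in $O(\ell')$ of the sets $F_{\cdot,\ell'}$, after which Janson gives probability $e^{-\Omega(\delta^2\ell')}$ of no such pair existing.
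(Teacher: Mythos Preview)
Your proposal is correct and follows essentially the same two-stage sprinkling approach as the paper: split the $\cG(n,p)$ part as $G'\cup G''$, use $G'$ to greedily build a family $A\subset E_\ell\cap E(G')$ of size $\Theta(\delta n)$ with pairwise-disjoint $F$-sets (failure probability $e^{-c\delta n}$), then argue that $G''$ hits the resulting set $S'=\bigcup_{e\in A}F_{e,\ell}$ of size $\Theta(\delta n\ell)$ except with probability $e^{-c\delta^2\ell}$. The only difference is the Stage~I bookkeeping---the paper reveals indicators of \emph{non-blocked} edges one at a time in arbitrary order and controls failure via the binomial tail $\P(\Bin(\mmf,p')<\tmf)$, whereas you scan in random order with a stopping-time argument---and your Janson-inequality alternative (not in the paper) is also valid.
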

\begin{theorem}\label{thm:Ham-plus-Dnp}
Fix $\delta>0$, and let $G\sim \vec\cH(n)\oplus\cD(n,p)$ for $p=\delta/n$. There exist absolute constants $C,c>0$ such that, for any $4\leq \ell \leq n/2$, we have $\llb \ell, n-\ell\rrb\subset\sL(G)$ with probability at least $1-C \exp(-c(\delta^2\wedge 1) \ell)$.
\end{theorem}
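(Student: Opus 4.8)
The plan is to run the switching argument behind Theorem~\ref{thm:Ham-plus-PM} in a directed setting, using the chords supplied by $\cD(n,p)$ — which, unlike a uniform matching, are \emph{mutually independent} — in the roles played there by the matching edges. Label $V=\{v_0,\dots,v_{n-1}\}$ so that $\vec\sfC_n=(v_0,v_1,\dots,v_{n-1},v_0)$, and let the chords of $\cD(n,p)$, $p=\delta/n$, be the remaining directed edges. As in the cubic case it suffices to bound, for each fixed $4\le t\le n/2$,
\[
\P\big(\{t,\ n-t\}\not\subset\sL(G)\big)\ \le\ C\exp\!\big(-c(\delta^2\wedge1)\,t\big),
\]
since the theorem then follows by the same union bound over $t\ge\ell$ that takes \eqref{eq:Ham-PM-single-ell-bound} to Theorem~\ref{thm:Ham-plus-PM} (each length in $\llb\ell,n-\ell\rrb$ is $\min(\cdot,n-\cdot)$ of one such pair).

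The first ingredient is a directed switching observation. Given a chord $e=(v_a,v_b)$, write $m_e=(a-b)\bmod n$; then $\vec\sfC_n\cup\{e\}$ contains the directed cycle $v_b\to v_{b+1}\to\cdots\to v_a\to v_b$ of length $m_e+1$, and if $f=(v_c,v_d)$ has $v_c,v_d$ on the arc $v_b\to\cdots\to v_a$ with $v_c$ preceding $v_d$, then replacing the sub-arc from $v_c$ to $v_d$ by $f$ produces a directed cycle of length $m_e+2-r$ with $r=(d-c)\bmod n$. Call a chord \emph{long} if $m_e$ lies in a fixed sub-interval of $(n/2,n)$. Fixing a target $t\le n/2$, the chords $f$ that "complete" a length-$t$ cycle from a given long $e$ are exactly those of span $r=m_e+2-t$ whose interval lies in the arc of $e$; there are $t-1$ of them — the directed analogue of the sets $F_{e,\ell}$ of \eqref{eq:def-F-e-ell} — and the union of these families over a revealed family of long chords is the directed auxiliary graph (the analogue of $\cX_{M_1}$). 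Since, unlike Observation~\ref{obs:switching}, a single directed switch does not yield the two lengths $t$ and $n-t$ at once (there is no orientation-reversal), one runs in parallel a second such scheme for length $n-t$: there $e$ is taken long with $m_e$ close to $n$ (specifically $m_e\ge n-t-1$) and the completing chord $f$ has small span $r=m_e+2-(n-t)$, again with $t-1$ choices per $e$.

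Next I would expose the chords in two batches, as in the proof of Theorem~\ref{thm:Ham-plus-PM}: first reveal which of the $\Theta(n^2)$ candidate long chords are present — there are $\Theta(\delta n)$ of them, with $\Po(\Theta(\delta))$ of each admissible span; then reveal the remaining (completing) chords, each present independently with probability $\delta/n$. The key lemma, the directed form of Lemma~\ref{lem:aux-after-stage-I}, should say that with probability $1-e^{-\Omega(\delta n)}$ the auxiliary graph built from the first batch has at least $c_0\,\delta n\,t$ completing slots for every $t\le n/2$ simultaneously: the present long chords of a fixed span are few and well separated along $\vec\sfC_n$, so each contributes $\Theta(t)$ fresh slots — an overlap estimate in the spirit of \eqref{eq:F-e-ell-F-f-ell-intersections} and a maximum-degree estimate in the spirit of Observation~\ref{obs:max-deg}, now computed for directed arcs, controlling the wastage — and chords of distinct spans contribute disjoint slots. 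Conditioning on this event, each of the $\ge c_0\delta n t$ slots is filled independently with probability $\delta/n$, so $\P(t\notin\sL(G))\le(1-\delta/n)^{c_0\delta n t}\le e^{-\Omega(\delta^2 t)}$; the same count for the long-cycle scheme gives $\P(n-t\notin\sL(G))\le e^{-\Omega(\delta^2 t)}$, and when $\delta\ge1$ the identical count yields exponent $\Omega(t)$, which is the source of the threshold $\delta^2\wedge1$. Adding the $e^{-\Omega(\delta n)}$ from the lemma (negligible against $e^{-\Omega(\delta^2 t)}$ as $t\le n$) gives the displayed per-pair bound, and summing over $t\ge\ell$ with the same bookkeeping as in Theorem~\ref{thm:Ham-plus-PM} and its binomial analogue Theorem~\ref{thm:Ham-plus-Gnp} finishes the proof.

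The step I expect to be the main obstacle is the directed bookkeeping rather than any single estimate. Because there is no orientation-reversal, one cannot harvest $t$ and $n-t$ from one switch, so one must set up and analyse the two switching schemes above and verify that their per-length estimates combine; one must re-derive the overlap bound \eqref{eq:F-e-ell-F-f-ell-intersections} and the maximum-degree bound (Observation~\ref{obs:max-deg}) for directed arcs; and one must partition "long" and "completing" chords into classes that make the two-batch exposure legitimate (i.e.\ genuinely independent), which is where the precise choice of span-intervals matters. A secondary point is to make the summation over $t$ interact cleanly with the $\delta^2\wedge1$ threshold and with the trivial bound $\P(\cdot)\le1$, exactly as is handled for Theorems~\ref{thm:Ham-plus-PM} and~\ref{thm:Ham-plus-Gnp}.
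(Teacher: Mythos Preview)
Your plan is in the right spirit but takes a more involved route than the paper, and the obstacle you flag is precisely the one the paper's setup is designed to avoid. The paper does \emph{not} mirror the matching-based argument of Theorem~\ref{thm:Ham-plus-PM}; it mirrors the proof of Theorem~\ref{thm:Ham-plus-Gnp} almost verbatim: split $\cD(n,p)$ as $G'\cup G''$ via the probability split $p'=p/2$, $p''=p/(2-p)$, and run the same sequential edge-revealing procedure to build a set $A\subset E(G')\cap E_\ell$ with pairwise disjoint $F$-families, then look for any $\vec f\in\bigcup_{\vec e\in A}F_{\vec e,\ell}$ in $G''$. The directed edge class is
\[
E_\ell=\{(v_i,v_j):\ (j-i)\bmod n\in\llb 2,n-\ell\rrb\},\qquad
F_{\vec e,\ell}=\{(v_{j+\ell-k-2},v_{i-k}):k\in\llb0,\ell-2\rrb\},
\]
and the crucial observation is that $F_{\vec e,\ell}\subset E_\ell$: both chords have forward span in $[2,n-\ell]$, so there is no need to partition into ``long'' and ``completing'' classes at all. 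Your two-batch edge-class exposure---whose legitimacy you rightly worried about---is replaced by a plain probability split, and the overlap estimate reduces to the single line~\eqref{eq:dir-F-e-ell-F-f-ell-intersections}, that at most $2\ell-6$ edges $\vec e$ share an $F$-edge with a given $\vec e_0$.

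This also removes the need for two parallel schemes. The paper runs one scheme for each target $\ell$, obtaining
\[
\P(\ell\notin\sL(G))\ \le\ 3\exp\!\big[-(\delta/4)^2\,((\ell-1)\wedge(n-\ell-1))\big]\qquad(4\le\ell\le n-4),
\]
where the $\min$ appears automatically from the trade-off between $|E_\ell|=n(n-\ell-1)$ (few first chords when $\ell$ is near $n$) and $|F_{\vec e,\ell}|=\ell-1$ (few completion slots when $\ell$ is small). Summing over $\ell$ in $\llb\ell_0,n-\ell_0\rrb$ gives the theorem. Incidentally, in your second scheme the count of completing chords per long $e$ should be $n-t-1$, not $t-1$ (an error in the helpful direction); but the real payoff of the paper's parametrization is that it makes the argument a direct transcription of~\S\ref{subsec:pf-of-Ham-plus-Gnp} with new values of $\tmf,\mmf$, rather than a new construction with its own bookkeeping.
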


\subsection{Proof of Theorem~\ref{thm:Ham-plus-Gnp}}\label{subsec:pf-of-Ham-plus-Gnp}
Assume w.l.o.g.\ that $0<\delta<\frac13$.
Using the same approach as in Section~\ref{sec:G(n,d)-almost-full} (cf.\ Eq.~\eqref{eq:Ham-PM-single-ell-bound}), we will establish the theorem by showing that, for every sufficiently large $n$, 
\begin{equation}\label{eq:HAM-Gnp-single-ell-bound}
    \P \left( \{ \ell, n-\ell +4	 \} \not\subset \sL(G) \right) \leq 3 e^{-(\delta/8)^2\ell} \qquad\mbox{for every $4 \leq \ell  \leq n/2+2$}\,,
\end{equation}
 implying the statement of the lemma via a union bound.
To this end, define $E_\ell$ and $F_{e,\ell}$ for all $e\in E_\ell$ as in~\eqref{eq:def-E_ell} and~\eqref{eq:def-F-e-ell},
recalling from Observation~\ref{obs:switching} that should $G$ contain a pair of edges $e,f$ such that $e\in E_\ell$ and $f\in F_{e,\ell}$, then together with $\sfC_n$, these would give rise to cycles of lengths $\ell$ and $n-\ell+4$, as desired.

Expose the $\cG(n,p)$ part of $G$ in two stages, as $G' \cup G''$ for independent random graphs $G'\sim\cG(n,p')$ and $G''\sim\cG(n, p'')$ with $p'=p/2$ and $ p'' = p/(2-p)\geq p/2$. Letting
\[ S' = \bigcup\{ F_{e,\ell}\,:\; e\in   E(G') \cap E_\ell\}\,,\]
we will show that for every sufficiently large $n$,
\begin{equation}\label{eq:S'-size}
\P( |S'| < \tfrac{1}{30} \delta n(\ell-3) ) \leq 2\exp(-(\delta/8)^2 \ell)\,,
\end{equation}
which will establish~\eqref{eq:HAM-Gnp-single-ell-bound} and complete the proof, since on the event $|S'|\geq \frac1{30}\delta n (\ell-3)$ we will encounter a pair of edges $e,f$ with $e\in E_\ell$ and $f\in F_{e,\ell}$ via some $e\in E(G')\cap E_\ell $ and $f\in E(G'')$ except with probability
\[ \P\left(E(G'')\cap S'=\emptyset \mid G'\,,\, |S'|\geq \tfrac1{30}\delta n (\ell-3)\right) = (1-p'')^{|S'|} \leq e^{-(\delta/8)^2 \ell + \frac1{16}\delta^2 } \leq 2 e^{-(\delta/8)^2 \ell}\,.
\]
To prove~\eqref{eq:S'-size}, 
we reveal the indicators in $G'$ of potential edges from $E_\ell$ sequentially, in $\tmf:=\lceil \delta n/15\rceil$ steps. 
Step $t=1,\ldots,\tmf$ will involve revealing a sequence of indicators, until finding the first one that appears in $G'$:
\begin{enumerate}[(1)]
\item Let $A$ (resp.\ $R$) be the set of all edges of $G'$ found (resp.\ pairs in $E_\ell$ examined) in previous steps.
\item Let $\cB = \{ f\in E_\ell\setminus A \,:\; F_{f,\ell}\cap F_{e,\ell} \neq \emptyset\mbox{ for some }e\in A\}$. 
\item Order $\cE = E_{\ell} \setminus (\cB \cup R)$ in an arbitrary way, and reveal its indicators one by one:
\begin{enumerate}[(a)]
\item if a pair $f\in\cE$ corresponds to an edge of $G'$, let $A\mapsto A\cup\{f\}$ and $R\mapsto R\cup\{f\}$, then end step $t$.
\item if a pair $f\in \cE$ does not belong to $E(G')$, let $R\mapsto R\cup\{f\}$. If this results in $|R| > \mmf := \lfloor n^2 / 5\rfloor$, abort the entire process, marking it a failure. Otherwise, move on to examine the next edge in $\cE$.
\end{enumerate}
\end{enumerate}
Recalling~\eqref{eq:F-e-ell-F-f-ell-intersections}, in each step $t$ we have 
\[ |\cB| \leq (2\ell-8)|A| \leq (2\ell-8) (\tmf-1) \leq \delta n^2 / 15 < n^2 / 45\]
(using that $\delta<\frac13$). By construction, $|R_{t-1}|\leq \mmf = \lfloor n^2/5\rfloor$, whereas $|E_\ell| = \frac12 n(n-\ell) \geq n^2/4-n$, and so 
\[ |\cE| \geq |E_\ell| - \frac{n^2}{45} - \frac{n^2}5 \geq \frac{1-o(1)}{36} n^2\,, 
\]
and in particular $\cE\neq\emptyset$ for large enough $n$. 
So, the only way the process could fail is if we had $|R| > \mmf$. The latter event, in turn, occurs if and only if fewer than $\tmf$ edges were found in the first $\mmf$ exposed pairs. Thus,
\[ \P(|R| > \mmf) \leq  \P(\Bin(\mmf,p') < \tmf) \leq \exp(-\tfrac1{180} \delta n ) \leq \exp(-(\delta/8)^2 n )\,,\]
using $\P(X - \mu < -a) \leq \exp(-\frac12 a^2/\mu)$ for a binomial random variable with mean $\mu$ (e.g.,~\cite[\S2, Eq.~(2.6)]{JLR00}). 
Hence, with probability at least $1-\exp(-(\delta/8)^2 n)$, we arrive at a set $A\subset E_\ell\cap E(G')$ where the corresponding sets $\{ F_{e,\ell} \}_{e\in A}$ are pairwise disjoint by construction, thus
$ |S'|\geq (\lfloor \ell/2\rfloor - 1)\tmf  \geq \frac1{30}\delta n (\ell - 3)$, yielding~\eqref{eq:S'-size}.
\qed

\subsection{Proof of Theorem~\ref{thm:Ham-plus-Dnp}}\label{subsec:pf-of-Ham-plus-Dnp}
Assume w.l.o.g.\ that $0<\delta<\frac13$. For $4 \leq \ell \leq n-4$, define:
\begin{equation}\label{eq:def-dir-E_ell} E_{\ell}:=\left\{ \vec e=\left( v_i,v_j \right) \in V\times V \;:\,   j-i \tpmod n \in \llb 2, n-\ell\rrb  \right\} \end{equation} 
and for every $\vec e = (v_i,v_j)\in E_\ell$, let 
\begin{equation}\label{eq:def-dir-F-e-ell}
F_{\vec e,\ell } := \left\{ ( v_{j+\ell - k-2 \tpmod n}, v_{i-k \tpmod n} ) \in E_\ell\,:\;  k \in\llb0, \ell -2\rrb  \right\} \,.
\end{equation}
(To see that $F_{\vec e,\ell}\subset E_{\ell}$, w.l.o.g.\ let $\vec e=(v_0,v_j)\in E_{\ell}$ for $j \in\llb2, n-\ell\rrb$, whereby every $\vec f=(v_{i'},v_{j'})\in F_{\vec e,\ell}$ has $j'-i'=n-\ell+2-j\in\llb2,n-\ell\rrb$.) In lieu of Observation~\ref{obs:switching}, we use the following simple fact (see Fig.~\ref{fig:shortcut}).

\begin{observation}\label{obs:dir-shortcut}
If $\vec e\in E_\ell$ and $\vec f\in F_{\vec e,\ell}$ then $\vec\sfC_n \cup \{\vec e,\vec f\}$ has a directed cycle of length $\ell$.
\end{observation}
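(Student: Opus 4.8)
\textbf{Proof proposal for Observation~\ref{obs:dir-shortcut}.}

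The plan is to exhibit the directed $\ell$-cycle explicitly from the data $\vec e, \vec f$, mirroring the proof of Observation~\ref{obs:switching} but in the simpler directed setting where a single ``shortcut'' replaces the two-swap switching. First I would normalize: by the rotational symmetry of $\vec\sfC_n$ we may assume $\vec e = (v_0, v_j)$ with $j \in \llb 2, n-\ell\rrb$, and then a generic $\vec f \in F_{\vec e,\ell}$ has the form $\vec f = (v_{j+\ell-k-2 \tpmod n}, v_{n-k \tpmod n})$ for some $k \in \llb 0, \ell-2\rrb$; write $a = j+\ell-k-2 \pmod n$ and $b = n-k \pmod n$ for the two endpoints, so $\vec f = (v_a, v_b)$.

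The core of the argument is then to trace the closed walk
\[
v_0 \;\xrightarrow{\ \vec e\ }\; v_j \;\xrightarrow{\ \vec\sfC_n\ }\; v_{j+1} \to \cdots \to v_a \;\xrightarrow{\ \vec f\ }\; v_b \;\xrightarrow{\ \vec\sfC_n\ }\; v_{b+1} \to \cdots \to v_{n-1} \to v_0\,,
\]
and to count its length. The first arc of $\vec\sfC_n$ runs from $v_j$ to $v_a = v_{j+\ell-k-2}$, contributing $\ell-k-2$ edges (plus the edge $\vec e$); the second arc of $\vec\sfC_n$ runs from $v_b = v_{n-k}$ back to $v_0$, contributing $k$ edges (plus the edge $\vec f$). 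Hence the total number of edges is $1 + (\ell-k-2) + 1 + k = \ell$, as required. What remains is to check this walk is a genuine cycle, i.e.\ that the two $\vec\sfC_n$-arcs are vertex-disjoint and internally avoid $\{v_0, v_j, v_a, v_b\}$ appropriately, so that no vertex is visited twice; this is exactly where the constraint $j \in \llb 2, n-\ell\rrb$ defining $E_\ell$ in~\eqref{eq:def-dir-E_ell} and the range $k \in \llb 0,\ell-2\rrb$ in~\eqref{eq:def-dir-F-e-ell} are used. Concretely, the first arc occupies indices $\llb j, j+\ell-k-2\rrb$ and the second occupies $\llb n-k, n-1\rrb \cup \{0\}$; one verifies that $j+\ell-k-2 \le n-k-1$ (equivalently $j \le n-\ell+1$, which holds since $j \le n-\ell$) guarantees these two index blocks are disjoint and also miss the remaining relevant vertices, and $j \ge 2$ together with $k\le \ell-2$ ensures $\vec e$ and $\vec f$ are not degenerate (not loops, and distinct from cycle edges).

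I expect the only mildly delicate point — the ``main obstacle,'' such as it is — to be the bookkeeping of the modular arithmetic at the wrap-around (when $k$ is small $b = n-k$ is near $n$; when $k$ is large $a = j+\ell-k-2$ could in principle wrap), and confirming in every case that the two arcs of $\vec\sfC_n$ are disjoint and that $\vec e, \vec f$ are legitimate chords rather than cycle edges or loops; this is precisely the role played by the defining inequalities for $E_\ell$ and $F_{\vec e,\ell}$, and I would dispatch it with the single inequality $j+\ell-k-2 \le n-k-1$ noted above together with $2 \le j$. A figure (Fig.~\ref{fig:shortcut}) showing the two cyclic arcs and the chords $\vec e, \vec f$ makes the disjointness transparent and can stand in for the case analysis. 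No probabilistic input is needed here — this is a purely deterministic combinatorial identity.
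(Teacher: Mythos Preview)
Your proposal is correct and follows essentially the same approach as the paper: normalize to $\vec e=(v_0,v_j)$, write $\vec f$ via its index $k$, and exhibit the $\ell$-cycle as $(\vec e, P_1, \vec f, P_2)$ with $P_1=(v_j,\ldots,v_{j+\ell-k-2})$ and $P_2=(v_{n-k},\ldots,v_0)$. The paper's version is terser---it simply asserts that $\vec e\in E_\ell$ forces $P_1$ and $P_2$ to be disjoint---whereas you spell out the inequality $j+\ell-k-2\le n-k-1$ behind that disjointness; but the argument is the same.
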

\begin{proof}
Let $\vec e \in E_{\ell}$, assuming w.l.o.g.\ that $\vec e = (v_0,v_j)$ for $j\in\llb 2,n-\ell \rrb$, and let $\vec f \in F_{\vec e,\ell}$, denoting by $k$ the index corresponding to $\vec f$ in this edge set as per Eq.~\eqref{eq:def-dir-F-e-ell}.
Since $\vec e\in E_{\ell}$, the (possibly trivial) paths $P_1=(v_j,v_{j+1},\ldots,v_{j+\ell-k-2})$, $P_2=(v_{n-k},\ldots,v_{n-1},v_0)$ are disjoint, so $(\vec e,P_1,\vec f,P_2)$ is an $\ell$-cycle. \end{proof}

{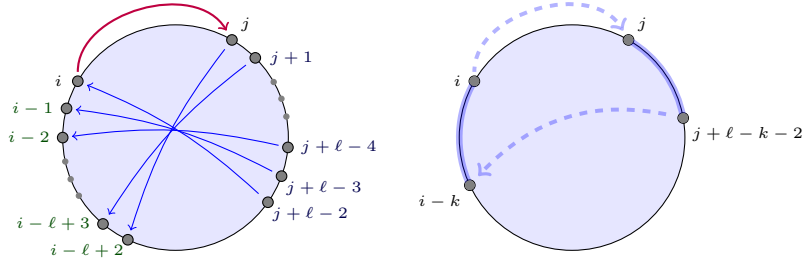
\begin{figure}
    \centering
  \pgfmathsetmacro{\rad}{1.5}
    \begin{tikzpicture}[decoration={markings,
  mark=between positions 0.35 and 0.9 step 7pt
  with { \draw [fill] (0,0) circle [radius=1pt];}}]

\begin{scope}[xshift=-150pt]
	\filldraw[fill=blue!10] (0,0) circle (\rad);	
	
	\draw[radius=2pt,radius=2pt,fill=gray]
    (150:\rad) circle[] node[label={[label distance=-1pt] left:{\tiny$i$}}] (ui) {}
    (60:\rad) circle[] node[label={[label distance=-5pt] above right:{\tiny$j$}}] (uj) {}
    
    (165:\rad) circle[] node[label={[label distance=-2pt, text=green!30!black]  left:{\tiny$i-1$}}] (ui1) {}
    (180:\rad) circle[] node[label={[label distance=-2pt, text=green!30!black] left:{\tiny${i-2}$}}] (ui2) {}
    (230:\rad) circle[] node[label={[label distance=-2pt, text=green!30!black] left  :{\tiny$i-\ell+3$}}] (uiKm1) {}
    (245:\rad) circle[] node[label={[label distance=-7.5pt, text=green!30!black] below left  :{\tiny$i-\ell+2$}}] (uiK) {}
    
    (45:\rad) circle[] node[label={[label distance=-1.5pt, text=blue!30!black] right:{\tiny$j+1$}}] (ujKm1) {}
    (-5:\rad) circle[] node[label={[label distance=-2pt, text=blue!30!black]  right:{\tiny${j+\ell-4}$}}] (uj2) {}
    (-20:\rad) circle[] node[label={[label distance=-7.5pt, text=blue!30!black] below right :{\tiny$j+\ell-3$}}] (uj1) {}
    (-35:\rad) circle[] node[label={[label distance=-7.5pt, text=blue!30!black] below right :{\tiny$j+\ell-2$}}] (uj0) {};
    
    \path[purple,thick,->] (ui) edge [bend left=75] (uj);
	\path[blue,->] (uj0) edge [bend right=10] (ui);
	\path[blue,->] (uj1) edge [bend right=10] (ui1);
	\path[blue,->] (uj2) edge [bend right=10] (ui2);
	\path[blue,->] (ujKm1) edge [bend right=10] (uiKm1);
	\path[blue,->] (uj) edge [bend right=10] (uiK);
	
	\path [gray,postaction={decorate}] (ui2) arc (180:215:\rad);
    \path [gray,postaction={decorate}] (uj2) arc (-5:40:\rad);
    
	\end{scope}
    
	\filldraw[fill=blue!10] (0,0) circle (\rad);	
	
	\draw[radius=2pt,fill=none]
    (150:\rad) circle[] node[label={[label distance=-2pt]  left:{\tiny$i$}}] (vi) {}
    (60:\rad) circle[] node[label={[label distance=-5pt] above right:{\tiny$j$}}] (vj) {}
    (10:\rad) circle[] node[label={[label distance=-5pt] below right:{\tiny${j+\ell-k-2}$}}] (vjk) {}
    (205:\rad) circle[] node[label={[label distance=-5pt] below left:{\tiny${i-k}$}}] (vik) {};
		
	\draw[blue,line width=2.5pt,opacity=0.3] (vj) arc (60:10:\rad);
	\draw[blue,line width=2.5pt,opacity=0.3] (vi) arc (150:205:\rad);
	\path[blue,line width=1.5pt,opacity=0.3,dashed, dash phase=3pt,->] (vjk) edge [bend left=-30] (vik);
	\path[blue,line width=1.5pt,opacity=0.3,dashed, dash phase=1pt,->] (vi) edge [bend left=75] (vj);
	
	\node[circle,scale=0.4,black,fill=gray] at (vi) {};
	\node[circle,scale=0.4,black,fill=gray] at (vj) {};
	\node[circle,scale=0.4,black,fill=gray] at (vjk) {};
	\node[circle,scale=0.4,black,fill=gray] at (vik) {};
	
    \end{tikzpicture}
\vspace{-0.1in}
    \caption{The edge subset set $F_{\vec e,\ell}$ corresponding to $ \vec e=(v_i,v_j)\in E_\ell$, and the $\ell$-cycle specified in  Observation~\ref{obs:dir-shortcut} using $\vec e$ and $\vec f\in F_{\vec e,\ell}$.}
    \label{fig:shortcut}
\end{figure}}

The bound on pairwise intersections of the sets $F_{\vec e,\ell}$ needed for the proofs, mirroring~\eqref{eq:F-e-ell-F-f-ell-intersections}, becomes
\begin{equation}
    \label{eq:dir-F-e-ell-F-f-ell-intersections} \#\left\{\vec e\in E_\ell \setminus \{\vec e_0\} \,:\; F_{\vec e,\ell}\cap F_{\vec e_0,\ell}\neq\emptyset \right\}
\leq 2\ell-6\qquad\mbox{for every $\vec e_0\in E_{\vec \ell}$}\,	
\end{equation}
which follows immediately from the fact that if $\vec e_1 = (v_i,v_j)\in E_\ell$ and $\vec e_2=(v_{i'},v_{j'})\in E_\ell$ are distinct edges satisfying $F_{\vec e_1,\ell} \cap F_{\vec e_2,\ell} \neq \emptyset$, then it must be the case that $i-i' \equiv d$ and $j-j'\equiv d$ for some $d\in \llb -\ell+3,\ell-3\rrb\setminus\{0\}$. 

From this point, we proceed with the procedure described in the proof of Theorem~\ref{thm:Ham-plus-Gnp}, with parameters~$\tmf$ (number of steps) and $\mmf$ (limit on the number of edges that may be exposed) given by
	\[ \tmf = \lceil \tfrac14 \delta (n-\ell-1) \rceil \qquad,\qquad  \mmf = \lfloor \tfrac34 n (n-\ell-1) \rfloor\,.\]
In every step, the set $\cB$ of edges we wish to avoid satisfies
\[ |\cB| \leq (2\ell-6)|A| \leq (2\ell-6)(\tmf-1)\leq \delta (n-\ell-1)\ell /2\,.\]
By definition $|R_{t-1}|\leq \mmf$ and $|E_\ell| = n(n-\ell-1)$, so, recalling that $\delta<\frac13$, plugging in $\ell\leq n$ yields
\[ |\cE|\geq |E_\ell|- |\cB| - \mmf 
\geq n(n-\ell-1)(1-\tfrac34 - \tfrac\delta2) \geq \tfrac1{12} n(n-\ell-1) > 0\,. \]
Now we have $ \frac38 \delta(n-\ell-2) \leq \mmf p' \leq \frac38\delta (n-\ell-1) $ and $\mmf p'-\tmf \geq \tfrac18\delta (n-\ell-1) - 2$, so (again using $\delta<\frac13$)
\[ \P(|R|>\mmf) \leq \P(\Bin(\mmf,p')<\tmf) \leq 
2\exp\left(-\tfrac1{48}\delta (n-\ell-1)\right) \leq 
2\exp\left(-\tfrac1{16}\delta^2 (n-\ell-1)\right)\,.
\]
Since each edge $\vec e \in A$ contributes $|F_{\vec e,\ell}| = \ell-1 $ unique edges to $S' = \bigcup\{ F_{\vec e,\ell} : \vec e\in A\}$, we have that $|S'| \geq \tfrac14\delta(\ell-1)(n-\ell-1)$ on the event that the above procedure was successful, whence
\[\P(E(G'') \cap S' = \emptyset \mid G'\,,\,|A|\geq \tmf) \leq (1-p'')^{|S'|} \leq \exp[ -\tfrac18 \delta^2 (\ell-1)(n-\ell-1)/n ]\,.\]
For $\ell \leq n/2$ this is at most $\exp[-\frac1{16} \delta^2(\ell-1)]$, whereas for $\ell > n/2$ this is at most $\exp[-\frac1{16} \delta^2 (n-\ell-1)]$. Altogether, we conclude that 
\[
    \P \left( \ell\notin \sL(G) \right) \leq 3 \exp\left[-(\delta/4)^2 \left((\ell-1)\wedge (n-\ell-1)\right)\right] \qquad\mbox{for every $4 \leq \ell  \leq n-4$}\,,
\]
completing the proof.
\qed

\subsection{Consequences for 
\texorpdfstring{$\cG(n,p)$}{G(n,p)} and \texorpdfstring{$\cD(n,p)$}{D(n,p)}}
From Theorems~\ref{thm:Ham-plus-Gnp} and~\ref{thm:Ham-plus-Dnp} we can readily infer the following.
\begin{corollary}\label{cor:G(n,p)-G'}
Fix $c>c'>1$ and $\gamma>0$, and let $G\sim\cG(n,p=\frac{c}n)$ and $G'\sim\cG(n,p'=\frac{c'}n)$. If $\omega_n$ and $L'_n$ are sequences such that $\omega_n\to\infty$ with $n$ and $\circum(G')\geq L'_n \geq \gamma n$ w.h.p., then $\llb \omega_n, L'_n-\omega_n\rrb\subset\sL(G)$ w.h.p.
The same conclusion holds when $G\sim\cD(n,p=\frac{c}n)$ and $G'\sim\cD(n,p'=\frac{c'}n)$ for $c,c',\omega_n,L'_n$ as above.
\end{corollary}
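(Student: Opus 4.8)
The plan is to expose $G$ as an independent union $G=G'\cup G''$ of $G'\sim\cG(n,c'/n)$ and a sparse graph $G''\sim\cG(n,\delta/n)$, then to restrict attention to the vertex set of the long cycle that $G'$ is assumed to contain, and finally to invoke Theorem~\ref{thm:Ham-plus-Gnp} on that vertex set, with $G''$ playing the role of the sparse binomial part and the long cycle playing the role of the Hamilton cycle. The directed case will be handled identically via Theorem~\ref{thm:Ham-plus-Dnp}.

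First I would record two easy reductions. Setting $\delta:=c-c'>0$, one has for each non-loop pair that $1-(1-\tfrac{c'}n)(1-\tfrac{\delta}n)<\tfrac cn$, so since $\cG(n,c/n)$, $\cG(n,c'/n)$ and $\cG(n,\delta/n)$ are all product measures over pairs, there is a coupling in which $G\supseteq G'\cup G''$ with $G'\sim\cG(n,c'/n)$ and $G''\sim\cG(n,\delta/n)$ independent; it thus suffices to prove $\llb\omega_n,L'_n-\omega_n\rrb\subset\sL(G'\cup G'')$ w.h.p. Second, replacing $\omega_n$ by $\tilde\omega_n:=\omega_n\wedge\lfloor\gamma n/2\rfloor$ only shrinks the claimed range, since $\llb\omega_n,L'_n-\omega_n\rrb\subseteq\llb\tilde\omega_n,L'_n-\tilde\omega_n\rrb$, while $\tilde\omega_n\to\infty$ still holds; so I may and will assume $\omega_n\le\lfloor\gamma n/2\rfloor$ (and $\omega_n\ge 4$ for $n$ large).

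I would then run the main argument on the w.h.p.\ event $\mathcal E=\{\circum(G')\ge L'_n\}$. On $\mathcal E$, fix in a $G'$-measurable way a cycle $Q$ of $G'$ with $m:=|V(Q)|\ge L'_n\ge\gamma n$, and put $W=V(Q)$. The key point is that $G''$ is independent of $G'$, so conditionally on $G'$ (hence on $W$ and $Q$) the restriction $G''[W]$ is still a binomial random graph with edge probability $\delta/n$ on the non-$Q$ pairs of $W$; identifying $W$ with $\{v_0,\ldots,v_{m-1}\}$ by traversing $Q$, the graph $Q\cup G''[W]$ has exactly the law $\cH(m)\oplus\cG(m,\delta/n)$. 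Since $m\ge\gamma n$ we have $\delta/n\ge\delta'/m$ for the fixed constant $\delta':=(c-c')\gamma>0$, and adding edges cannot remove cycle lengths, so $Q\cup G''[W]$ stochastically dominates $\cH(m)\oplus\cG(m,\delta'/m)$. Applying Theorem~\ref{thm:Ham-plus-Gnp} with this $\delta'$ and $\ell=\omega_n$ (legitimate as $4\le\omega_n\le\lfloor\gamma n/2\rfloor\le m/2$ for $n$ large) yields, uniformly on $\mathcal E$,
\[\P\bigl(\llb\omega_n,m-\omega_n+4\rrb\not\subset\sL(Q\cup G''[W])\bigm|G'\bigr)\le C\exp\!\bigl(-c\,((\delta')^2\wedge1)\,\omega_n\bigr),\]
which tends to $0$ because $\delta'$ is a fixed constant and $\omega_n\to\infty$. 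As $m\ge L'_n$ gives $\llb\omega_n,L'_n-\omega_n\rrb\subseteq\llb\omega_n,m-\omega_n+4\rrb$ and $\sL(Q\cup G''[W])\subseteq\sL(G)$ on $\mathcal E$, it follows that $\P(\llb\omega_n,L'_n-\omega_n\rrb\not\subset\sL(G))\le\P(\mathcal E^{\mathsf c})+C\exp(-c((\delta')^2\wedge1)\omega_n)\to 0$. For $\cD(n,p)$ the same steps go through verbatim, using a directed cycle $Q$ of $G'\sim\cD(n,c'/n)$ of length $m\ge L'_n$, an independent $G''\sim\cD(n,\delta/n)$, and Theorem~\ref{thm:Ham-plus-Dnp}, whose output $\llb\omega_n,m-\omega_n\rrb$ still contains $\llb\omega_n,L'_n-\omega_n\rrb$.

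The point needing the most care is the conditioning: the cycle $Q$ serving as the "Hamilton cycle" is itself random and $G'$-measurable, whereas Theorem~\ref{thm:Ham-plus-Gnp} is stated for a deterministic reference cycle with an independent binomial overlay. The resolution is exactly the independence of $G''$ from $G'$, which makes $(Q,G''[W])$ conditionally distributed as $\cH(m)\oplus\cG(m,\delta/n)$ with a failure bound that does not depend on which cycle $Q$ was selected. Everything else — the edge-coupling realizing $\cG(n,c/n)\supseteq G'\cup G''$, the truncation of $\omega_n$, and the density comparison $\delta/n\ge\delta'/m$ — is routine.
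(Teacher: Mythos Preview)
Your argument is correct and is essentially identical to the paper's proof: both decompose $G$ as $G'\cup G''$ via sprinkling, locate a cycle $Q$ of length $m\ge L'_n\ge\gamma n$ in $G'$, observe that the restriction of the independent binomial part to $V(Q)$ has density at least $\gamma(c-c')/m$, and then invoke Theorem~\ref{thm:Ham-plus-Gnp} (resp.\ Theorem~\ref{thm:Ham-plus-Dnp}) on that $m$-vertex cycle with $\delta'=\gamma(c-c')$. The only cosmetic differences are that the paper uses the exact sprinkling probability $p''=(p-p')/(1-p')$ rather than your containment coupling with $G''\sim\cG(n,(c-c')/n)$, and it leaves the truncation $\omega_n\le m/2$ implicit; your extra care on the conditioning and on $\tilde\omega_n$ is welcome but not a departure in method.
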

\begin{proof}
Via standard sprinkling, draw $G\sim\cG(n,p)$ by exposing $G'\sim\cG(n,p')$, and then for each of the missing edges, independently adding it with probability $p'' := \frac{p-p'}{1-p'}$. Reveal $G'$, and suppose that it contains a cycle $\sfC_{\ell'_n}$ of length~$\ell'_n \geq L'_n\geq \gamma n$ (an event that occurs w.h.p.\ by our hypothesis). The induced subgraph of $G$ on~$\sfC_{\ell'_n}$ dominates a copy of $\cH(\ell'_n)\oplus\cG(\ell'_n, p'')$, and $ p''\geq\frac{c-c'}n\geq \frac{\gamma (c-c')}{\ell'_n}$, so Theorem~\ref{thm:Ham-plus-Gnp} (with $\delta=\gamma(c-c')>0$) implies $\llb \omega_n,\ell'_n-\omega_n\rrb\subset \sL(G)$ (hence also $\llb\omega_n,L'_n-\omega_n\rrb\subset\sL(G)$) w.h.p. 
For $G\sim\cD(n,p)$, we use the same coupling of $G$ to $G'\cup G''$ for $G'\sim\cD(n,p')$ and $G''\sim\cD(L'_n,p'')$, whence Theorem~\ref{thm:Ham-plus-Dnp} completes the proof.
\end{proof}

\begin{proof}[\textbf{\emph{Proof of Theorem~\ref{thm:G(n,p)}}}]
Beginning with $G\sim\cG(n,p)$, we appeal to the recent result of Anastos and Frieze~\cite{AF19} (see Theorem~1.3(a) in that work) that, for some absolute $C_0>0$, if $G\sim\cG(n,p)$ with $p=c/n$ for fixed $c>C_0$ then  $\circum(G)/n \to f(c)$ in probability for some function $f(c)$. As $\{\circum(G)\geq k\}$ is a monotone increasing property, the limit $f(c)$ is necessarily monotone non-decreasing in $c$, and as such has countably many discontinuity points. Restricting our attention to every continuity point $c$, for every $\epsilon>0$ there exists $\delta>0$ such that at  $p'=(1-\delta)p$ we have $\circum(G')\geq (1-\epsilon)\circum(G)$, whence Corollary~\ref{cor:G(n,p)-G'} implies that
\begin{equation}\label{eq:G(n,p)-almost-full}
\P\left(\llb \omega_n,(1-\epsilon)\circum(G)\rrb\right)\to 1\qquad \mbox{for every sequence $\omega_n$ such that $\lim_{n\to\infty}\omega_n = \infty$}\,.
\end{equation}
Let $Z_{n,k}$ $(k\geq 3)$ be the number of $k$-cycles in $G\sim\cG(n,p=\frac{c}n)$. It is well-known (see~\cite[Cor.~4.9]{Bollobas01}) that, for every fixed integer $K$, the joint law of the variables $\{Z_{n,k}\}_{k=3}^K$ converges to that of independent Poisson random variables $\{Z_{\infty,k}\}_{k=3}^K$ where $\E Z_{\infty,k}=\lambda_k$ for $\lambda_k := c^k/(2k)$. Analogously to~\eqref{eq:Z-convergence}, fix $\epsilon'>0$ and let $\omega'_n$ be the maximal $K$ such that 
\[ \Big|\P\big(\bigcup_{k=\ell}^K \{Z_{n,k}>0\}\big) - 
 \P\big(\bigcup_{k=\ell}^K \{Z_{\infty,k}>0\}\big)\Big| < \epsilon'\qquad\mbox{for all $N\geq n$}\,.\]
The aforementioned convergence result implies that $\omega'_n\to\infty$ with $n$, so 
\[\P(\lim_{n\to\infty}\{Z_{\infty,k}>0\}) = \prod_{k=\ell}^\infty (1-e^{-\lambda_k}) =\theta(c,\ell)
\,.\] Combining this with~\eqref{eq:G(n,d)-almost-full} shows that $\P(\llb\ell,(1-\epsilon)\circum(G)\rrb\subset\sL(G))$ is within $\epsilon'+o(1)$ of $\theta(c,\ell)$, as required. 

The analogous statement for $G\sim\cD(n,p)$  follows from Corollary~\ref{cor:G(n,p)-G'} in exactly the same manner as argued above, except that now, rather than relying on~\cite{AF19}, we appeal to the sequel by the same authors~\cite{AF20} for the fact that there exists some absolute $C_0>0$ such that, if $G\sim\cD(n,p)$ with $p=c/n$ for fixed $c>C_0$ then $\circum(G)/n\to f(c)$ in probability for some (non-decreasing) function $f(c)$. Finally, the joint law of short cycles is again that of asymptotically independent Poisson random variables (e.g., via the same method-of-moments argument referenced above, and stated for arbitrary strictly balanced graphs in~\cite[Thm.~4.8]{Bollobas01}), yet now the automorphism group of a $k$-cycle in the directed graph $G$ has order $k$ rather than $2k$.
\end{proof}

\begin{remark}
The weaker statement where the absolute constant $C_0>0$ from Theorems~\ref{thm:G(n,p)} is replaced by $C_\epsilon$ may be derived from Corollary~\ref{cor:G(n,p)-G'} using much earlier works.
Namely, consider the statement  that for every $\epsilon>0$ there exists some $C_\epsilon$ so that, if $\omega_n$ is any sequence going to $\infty$ with $n$, then
\[ \P\left(\llb\omega_n,n-(1+\epsilon)c e^{-c}n \rrb\subset \sL(G)\right) \to 1\qquad \mbox{if $G\sim \cG(n,p)$ with $p=\frac c n$ for $c\geq C_\epsilon$ fixed}\,.\]
(The number of degree 1 vertices in $G\sim\cG(n,p)$---which are not part of any cycle---is typically $(ce^{-c}+o(1))n$.) 
This follows from combining Corollary~\ref{cor:G(n,p)-G'} with the result of Frieze~\cite{Frieze86} that $\circum(G)\geq (1 - (1+\epsilon_c)c e^{-c} )n$ w.h.p.\ for some sequence $\epsilon_c$ going to $0$ as $c\to\infty$.
Similarly, one obtains the analogous statement for~$\cD(n,p)$ (where there are typically $(2e^{-c}+o(1))$ vertices of 0 out-degree or 0 in-degree), namely that
\[ \P\left(\llb\omega_n,n-(2+\epsilon) e^{-c}n \rrb\subset \sL(G)\right) \to 1\qquad \mbox{if $G\sim \cD(n,p)$ with $p=\frac c n$ for $c\geq C_\epsilon$ fixed}\,,\]
via Corollary~\ref{cor:G(n,p)-G'} and the $\cD(n,p)$ analog of said result of~\cite{Frieze86}, due to the last two authors and Sudakov~\cite{KLS13}.
\end{remark}

We next address the setting of $\cG(n,p)$ and $\cD(n,p)$ when $p=\frac{1+\epsilon}n$ for a small $\epsilon>0$.
{\L}uczak~\cite{Luczak91b} established the existence of constants $0<\gamma_0<\gamma_1$ and $\epsilon_0>0$ such that
\begin{equation}\label{eq:circum-supercritical-G(n,p)} \P(\circum(G)\in \llb \gamma_0\epsilon^2 n,\gamma_1\epsilon^2 n\rrb)\to 1 \quad\mbox{if $G\sim\cG(n,p)$ with $p=\frac{1+\epsilon}n$ for $0<\epsilon<\epsilon_0$ fixed}\,.\end{equation}
\begin{remark}\label{rem:constants-gamma0-gamma1}
This statement was proved in~\cite{Luczak91b} for the constants $\gamma_0=\frac43$ and $\gamma_1 
= \frac43(1 + \log\frac32 ) < 1.874$
for the slightly supercritical case $\epsilon=o(1)$, $\epsilon^3 n\to\infty$. These constants are easily explained via the description of the giant component of $G$ as having a \emph{kernel} $\mathcal{K}\sim\cG(N,3)$ with $N\sim\frac43 \epsilon^3 n$ vertices (and $\sim2\epsilon^3 n$ edges), inflated into a 2-core by replacing every edge by a path of length i.i.d.\  Geometric($\frac{1}\epsilon$) (see~\cite{DKLP11} for a formal statement of this description).
In this case, the kernel is Hamiltonian w.h.p.\ by~\cite{RW92}, giving the constant~$\gamma_0$. A cycle may visit at most two of the edges incident to any vertex in the kernel, thus taking the longest $\frac23$ of the paths replacing the edges of $\mathcal{K}$, combined with the classical representation of order statistics for i.i.d.\ exponential variables, yields the constant $\gamma_1$. For improved constants replacing $\gamma_0$ and $\gamma_1$, see, e.g.,~\cite{KW13}.
The analogous description of the strictly supercritical giant component~\cite{DLP14} extends~\eqref{eq:circum-supercritical-G(n,p)} to $0<\epsilon<\epsilon_0$ fixed.
\end{remark}
The elegant coupling argument of McDiarmid~\cite{McDiarmid80} immediately extends~\eqref{eq:circum-supercritical-G(n,p)} to $G\sim\cD(n,p)$.

\begin{theorem}\label{thm:G(n,p)-D(n,p)-slightly-supercritical}
Suppose that there exist absolute constants $\gamma_0,\epsilon_0>0$ such that
\[ \P(\circum(G) \geq \gamma_0 \epsilon^2 n) \to 1\qquad\qquad\mbox{if $G\sim\cG(n,p)$ for $p=\frac{1+\epsilon}n$ for fixed $0<\epsilon<\epsilon_0$}\,.\]
Then for every $0<\epsilon<\epsilon_0$ and every fixed $\ell\geq 3$, the random graph $G\sim\cG(n,p)$ with $p=(1+\epsilon)/n$ has 
\[ \P(\llb \ell,(1-\delta)\gamma_0 \epsilon^2 n\rrb\subset\sL(G))\to \theta(c,\ell)\qquad\mbox{for every fixed $\delta>0$}\,.\]
The same statement holds for $G\sim\cD(n,p)$ with $p=\frac{1+\epsilon}n$ when replacing $\theta(c,\ell)$ by $\theta'(c,\ell)$ as in Theorem~\ref{thm:G(n,p)}.
\end{theorem}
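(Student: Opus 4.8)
The plan is to replay the proof of Theorem~\ref{thm:G(n,p)} essentially verbatim, the only substitution being that in the fixed slightly supercritical window the hypothesized bound $\circum(G)\geq\gamma_0\epsilon^2 n$ takes over the role played there by the Anastos--Frieze limit $\circum(G)/n\to f(c)$. The point that makes this work painlessly is that, since $\epsilon\in(0,\epsilon_0)$ is a \emph{fixed} constant, $\gamma_0\epsilon^2 n$ is linear in $n$, so Corollary~\ref{cor:G(n,p)-G'} applies with a fixed constant $\gamma>0$. Write $c=1+\epsilon$ and assume without loss of generality that $0<\delta<1$.

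\emph{Long cycles.} Fix any $\epsilon'\in(\sqrt{1-\delta}\,\epsilon,\,\epsilon)$, which is possible as $\sqrt{1-\delta}<1$; then $0<\epsilon'<\epsilon<\epsilon_0$ and $c_1:=\gamma_0\bigl((\epsilon')^2-(1-\delta)\epsilon^2\bigr)>0$. Applying the hypothesis of the theorem with $\epsilon'$ in place of $\epsilon$ gives that $G'\sim\cG\bigl(n,\tfrac{1+\epsilon'}n\bigr)$ has $\circum(G')\geq\gamma_0(\epsilon')^2 n$ w.h.p. I would then invoke Corollary~\ref{cor:G(n,p)-G'} with $c=1+\epsilon$, $c'=1+\epsilon'$, $L'_n=\lfloor\gamma_0(\epsilon')^2 n\rfloor$ and $\gamma=\tfrac12\gamma_0(\epsilon')^2$ (legitimate since $c>c'>1$ and $\gamma>0$ are fixed), concluding that for every $\omega_n\to\infty$ one has $\llb\omega_n,L'_n-\omega_n\rrb\subset\sL(G)$ w.h.p. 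Since $L'_n-\omega_n-(1-\delta)\gamma_0\epsilon^2 n\geq c_1 n-\omega_n-1\to\infty$, this delivers the analogue of~\eqref{eq:G(n,p)-almost-full}: for every $\omega_n\to\infty$, $\P\bigl(\llb\omega_n,(1-\delta)\gamma_0\epsilon^2 n\rrb\subset\sL(G)\bigr)\to1$.

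\emph{Short cycles, and the directed case.} Exactly as in the proof of Theorem~\ref{thm:G(n,p)}, the joint law of the $k$-cycle counts of $G\sim\cG(n,\tfrac cn)$ over any finite range of $k$ converges to that of independent Poissons with means $\lambda_k=c^k/(2k)$; so for each $\eta>0$ one can choose $\omega'_n\to\infty$ slowly enough that $\bigl|\P(\llb\ell,\omega'_n\rrb\subset\sL(G))-\prod_{k=\ell}^{\omega'_n}(1-e^{-\lambda_k})\bigr|<\eta$ eventually. Taking $\omega_n=\omega'_n$ in the previous display, so that the ranges $\llb\ell,\omega'_n\rrb$ and $\llb\omega'_n,(1-\delta)\gamma_0\epsilon^2 n\rrb$ concatenate, and letting $\eta\downarrow0$ then gives $\P\bigl(\llb\ell,(1-\delta)\gamma_0\epsilon^2 n\rrb\subset\sL(G)\bigr)\to\prod_{k\geq\ell}(1-e^{-\lambda_k})=\theta(c,\ell)$. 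The statement for $G\sim\cD(n,p)$ follows the same route, now using the directed half of Corollary~\ref{cor:G(n,p)-G'}, the fact that McDiarmid's coupling~\cite{McDiarmid80} transfers the hypothesized circumference lower bound from $\cG\bigl(n,\tfrac{1+\epsilon'}n\bigr)$ to $\cD\bigl(n,\tfrac{1+\epsilon'}n\bigr)$, and that a directed $k$-cycle has automorphism group of order $k$ rather than $2k$, which replaces $\lambda_k$ by $c^k/k$ and hence the limit by $\theta'(c,\ell)=\prod_{k\geq\ell}(1-e^{-c^k/k})$.

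\emph{Main obstacle.} I do not anticipate a genuine difficulty: the theorem is, like Theorem~\ref{thm:G(n,p)} itself, essentially immediate from Corollary~\ref{cor:G(n,p)-G'} combined with the Poisson asymptotics for short cycle counts. The only step needing a moment's care is the bookkeeping around the sprinkling loss — one must check that shrinking the edge-density parameter from $\epsilon$ to $\epsilon'$ costs only an arbitrarily small multiplicative factor in the guaranteed circumference, which is exactly why the hypothesis is phrased with a single constant $\gamma_0$ valid uniformly over all $\epsilon<\epsilon_0$, and why any $\epsilon'\in(\sqrt{1-\delta}\,\epsilon,\epsilon)$ suffices.
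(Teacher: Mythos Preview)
Your proposal is correct and follows essentially the same route as the paper: sprinkle by passing from $c=1+\epsilon$ to $c'=1+\epsilon'$ with $\epsilon'$ near $\epsilon\sqrt{1-\delta}$, invoke the hypothesis at $\epsilon'$ and apply Corollary~\ref{cor:G(n,p)-G'} for the long range, then use the Poisson short-cycle asymptotics for $\llb\ell,\omega_n\rrb$. The paper takes the boundary value $\epsilon'=\epsilon\sqrt{1-\delta}$ (so $\gamma_0(\epsilon')^2=(1-\delta)\gamma_0\epsilon^2$ on the nose) and leaves the $-\omega_n$ slack implicit, whereas you choose $\epsilon'$ strictly inside the interval to absorb it explicitly; this is a cosmetic difference only.
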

\begin{proof}
Let $G\sim\cG(n,p)$ (the same argument will cover $G\sim\cD(n,p)$, as Corollary~\ref{cor:G(n,p)-G'} holds for both models). Fix $0<\epsilon<\epsilon_0$, let $0<\delta<1$, and define $c'=1+\epsilon\sqrt{1-\delta}$. Then w.h.p.\  $G'\sim\cG(n,p')$ with $p=c'/n$ has $\circum(G') \geq (1-\delta)\gamma_0 \epsilon^2 n$ by assumption, so $\llb \omega_n,(1-\delta)\gamma_0\epsilon^2 n-\omega_n \rrb\subset \sL(G)$ w.h.p.\ by Corollary~\ref{cor:G(n,p)-G'}.
The range $\llb \ell,\omega_n\rrb $ is covered exactly as in the proof of Theorem~\ref{thm:G(n,p)}, and produces the limiting probability $\theta(c,\ell)$ (and its modified version $\theta'(c,\ell)$ in the directed case $G\sim\cD(n,p)$).
\end{proof}

\begin{remark} The machinery developed in Theorem~\ref{thm:Ham-plus-Gnp} can be applied also to models of random graphs set  by adding random edges to base graphs with a given property. For example, it implies through the results of~\cite{KRS15} that adding $\delta n$ random edges to a tree $T$ on $n$ vertices with maximum degree bounded by $\Delta$ produces typically a graph G with the set $\sL(G)$ containing all cycle lengths in $\llb\omega_n,cn\rrb$ for $c=c(\delta,\Delta)>0$. The proof proceeds using the first portion of random edges to create w.h.p.\ a linearly long cycle $C$ (see~\cite[Thm.~6]{KRS15}), and then by applying Theorem~\ref{thm:Ham-plus-Gnp} to $C$ with the second portion of random edges.
\end{remark}
\subsection*{Acknowledgment} M.K.~was supported in part by
 USA-Israel BSF grant 2018267 and ISF grant 1261/17.
E.L.~was supported in part by NSF grant DMS-1812095.

\bibliographystyle{abbrv}
\bibliography{refs}

\end{document}